\title{The integral cohomology of the group of loops}
\author[C~Jensen]{Craig Jensen}
\address{Department of Mathematics\\
         University of New Orleans\\\newline
         New Orleans, LA 70148\\USA}
\email{jensen@math.uno.edu}
\author[J~McCammond]{Jon McCammond} 
\address{Department of Mathematics\\ 
         University of California\\\newline 
         Santa Barbara, CA 93106\\USA}
\email{jon.mccammond@math.ucsb.edu}
\author[J~Meier]{John Meier}
\address{Deptartment of Mathematics\\
         Lafayette College\\\newline
         Easton, PA 18042\\USA}
\email{meierj@lafayette.edu}
\def\cnewtheorem#1[#2]#3{\newtheorem{#1}{#3}[section]
\expandafter\let\csname c@#1\endcsname\c@thm}
\newtheorem*{mainthm}{Main Theorem}
\newtheorem*{blconj}{Corollary}
\newtheorem{thm}{Theorem}[section]
\theoremstyle{definition}
\newtheorem{rem}{Remark}
\newtheorem*{ack}{Acknowledgments}
\newcommand{\blackboard}[1]{\ensuremath{\mathbb{#1}}}
\newcommand{\script}[1]{\ensuremath{\mathcal{#1}}}
\newcommand{\smallcaps}[1]{\textrm{\textsc{#1}}}
\newcommand{\F}{\blackboard{F}}
\newcommand{\A}{\script{A}}
\newcommand{\B}{\script{B}}
\newcommand{\euler}{\raisebox{.5mm}{\ensuremath{\chi}}}
\newcommand{\psig}{{\mbox{\rm P}\Sigma}}
\newcommand{\opsig}{{\mbox{\rm OP}\Sigma}}
\newcommand{\balpha}{{\bar{\alpha}}}
\newcommand{\Aut}{\smallcaps{Aut}}
\newcommand{\Out}{\smallcaps{Out}}
\newcommand{\Stab}{\smallcaps{Stab}}
\newcommand{\MM}{{\mbox{MM}}}
\newcommand{\hyper}{\smallcaps{HT}}
\renewcommand{\rk}{{\mbox{\rm rk}}}
\begin{document}

\begin{asciiabstract}

\end{asciiabstract}

\begin{abstract} 
Let $\mbox{P}\Sigma_n$ denote the group that can be thought of either
as the group of motions of the trivial $n$--component link or the group
of symmetric automorphisms of a free group of rank $n$.  The integral
cohomology ring of $\mbox{P}\Sigma_n$ is determined, establishing a
conjecture of Brownstein and Lee.
\end{abstract}

\maketitle

\section{Introduction}

Let $L_n$ be a collection of $n$ unknotted, unlinked circles in
$3$--space, and let $\psig_n$ be the group of motions of $L_n$ where
each circle ends up back at its original position.  This group was
introduced in the PhD thesis of David Dahm, a student of Ralph Fox,
and was later studied by various authors, notably Deborah Goldsmith.
Alan Brownstein and Ronnie Lee succeeded in computing
$H^2(\psig_n,\Z)$ in \cite{BrLe93}, and at the end of their paper
conjecture a presentation for the algebra $H^*(\psig_n, \Z)$.  Further
evidence for this conjecture came when the cohomological dimension was
computed ($\mbox{cd}(\psig_n) = n-1$) by Collins in \cite{Co89}, and
when the Euler characteristic was computed ($\euler(\psig_n) =
(1-n)^{n-1}$) by the authors in \cite{McMe04} (see also
\cite{JeMcMeEuler}).  Here we establish the Brownstein--Lee
Conjecture.  As our argument is a mixture of spectral sequences and
combinatorial identities, it seems that Birman was quite prescient in
her \emph{Mathematical Review} of the Brownstein--Lee paper: ``The
combinatorics of the cohomology ring appears to be rich, and the
attendant geometric interpretations are very pleasing.''

Because $\pi_1(S^3 \setminus L_n) \cong \F_n$, it is not surprising
that there is a map
\[
\psig_n \rightarrow \Aut[\pi_1(S^3 
\setminus L_n)] \cong \Aut(\F_n)\ .
\]
Less immediate is that this map is injective, hence the group
$\psig_n$ can be represented as a group of free group automorphisms.
Its image in ${\Out}(\F_n)$ is denoted $\opsig_n$.  (This and other
background information is given in \fullref{sec:background}.)  The bulk
of our work focuses on the quotient $\opsig_n$.  We compute its
integral cohomology groups using the equivariant spectral sequence for
the action of $\opsig_n$ on a contractible complex introduced by
McCullough and Miller in \cite{McMi96}.  As is often the case, the
first page of this spectral sequence is charming yet opaque.
Combinatorial arguments are used to show that the $E_2$ page of the
spectral sequence is concentrated in a single column, hence one can
read off the cohomology groups from this page.  From this we get our
Main Theorem and then derive the Brownstein--Lee conjecture.

\begin{mainthm}
The Poincar\'e series for $H^*(\opsig_n, \Z)$---a formal power series
where the coefficient of $z^k$ is the rank of $H^k(\opsig_n,\Z)$---is
${\mathfrak p}(z) = (1 + nz)^{n-2}$.
\end{mainthm}

\begin{blconj}[The Brownstein--Lee Conjecture]
The cohomology of $H^*(\psig_n, \Z)$ is generated by one-dimensional
classes $\alpha_{ij}^*$ where $i \not = j$, subject to the relations:
\begin{enumerate}
\item $\alpha_{ij}^* \wedge \alpha_{ij}^* = 0$
\item $\alpha_{ij}^* \wedge \alpha_{ji}^* = 0$
\item $\alpha_{kj}^* \wedge \alpha_{ji}^* = (\alpha_{kj}^* -
  \alpha_{ij}^*) \wedge \alpha_{ki}^*$ 
\end{enumerate}
and the Poincar\'e series is ${\mathfrak p}(z) = (1 + nz)^{n-1}$.
\end{blconj}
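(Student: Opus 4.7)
The plan is to deduce the Brownstein--Lee Conjecture from the Main Theorem by comparing $H^*(\psig_n, \Z)$ to $H^*(\opsig_n, \Z)$ through the short exact sequence
\[
1 \to \F_n \to \psig_n \to \opsig_n \to 1,
\]
whose kernel is $\Inn(\F_n) \cong \F_n$: for $n \ge 2$, every inner automorphism $c_w$ sends $x_i$ to the conjugate $w x_i w^{-1}$ and therefore belongs to $\psig_n$.

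First, I would analyze the Lyndon--Hochschild--Serre spectral sequence
\[
E_2^{p,q} = H^p(\opsig_n, H^q(\F_n, \Z)) \Rightarrow H^{p+q}(\psig_n, \Z).
\]
Since $\F_n$ is free, only the rows $q=0$ and $q=1$ are nonzero, with $H^1(\F_n,\Z) \cong \Z^n$. Because every element of $\psig_n$ acts trivially on the abelianization of $\F_n$, and inner automorphisms act trivially on cohomology, the induced $\opsig_n$--action on $H^*(\F_n,\Z)$ is trivial, giving $E_2^{p,1} = H^p(\opsig_n, \Z)^n$. Combining with the Main Theorem, the Poincar\'e series of the $E_2$--page is
\[
(1 + nz)(1+nz)^{n-2} = (1+nz)^{n-1},
\]
which provides an upper bound for the Poincar\'e series of $H^*(\psig_n, \Z)$.

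For the matching lower bound I would construct one-dimensional classes $\alpha_{ij}^* \in H^1(\psig_n, \Z)$ directly: writing $\sigma(x_j) = w_j x_j w_j^{-1}$ for $\sigma \in \psig_n$, set $\alpha_{ij}^*(\sigma)$ equal to the exponent sum of $x_i$ in $w_j$; this is well-defined whenever $i \ne j$, since $w_j$ is determined only modulo $\langle x_j \rangle$. One then verifies that the relations (1)--(3) hold in $H^*(\psig_n,\Z)$; the triangle relation (3) is the key identity, already established at the level of $H^2$ by Brownstein and Lee. This produces a graded ring homomorphism from the algebra $\A$ presented by the stated generators and relations into $H^*(\psig_n, \Z)$.

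To finish, I would compute the Poincar\'e series of $\A$ combinatorially. Using (1) to enforce distinct letters in each monomial, (2) to kill mutually reversed pairs $\alpha_{ij}^* \wedge \alpha_{ji}^*$, and (3) to rewrite ``triangle'' patterns, one reduces every monomial to a normal form indexed by a family of labelled rooted forests on $\{1,\ldots,n\}$, and a direct count of these forests yields Poincar\'e series $(1+nz)^{n-1}$. Matching this against the upper bound forces $\A \to H^*(\psig_n, \Z)$ to be an isomorphism and, as a by-product, forces the Lyndon--Hochschild--Serre spectral sequence to collapse at $E_2$. The main obstacle is precisely this last combinatorial step: producing a provably complete reduction to the forest basis and showing that no unexpected linear relations arise when relation (3) is iterated on words of length greater than two.
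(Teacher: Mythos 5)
Your broad outline overlaps substantially with the paper's: use the Lyndon--Hochschild--Serre spectral sequence for $1 \to \F_n \to \psig_n \to \opsig_n \to 1$, invoke the Brownstein--Lee relations at the level of $H^*(\psig_n,\Z)$, and reduce the count to planted forests. But the logic of the final ``squeeze'' is inverted relative to the paper's, and the inversion creates two real gaps.

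First, from the $E_2$ page you extract only an \emph{upper} bound for the ranks of $H^*(\psig_n,\Z)$. The paper instead proves (Lemma~\ref{lem:totallynonhomologoustozero}) that $B\F_n$ is totally non-homologous to zero in $B\psig_n$ and then applies Leray--Hirsch (\fullref{lem:lerayhirsch}), which kills the differentials and gives the ranks \emph{exactly}, not as a bound. This is the missing lemma in your plan: without it you have one inequality where the paper has an equality, and that difference propagates.

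Second, and as a consequence, you are forced to compute the Poincar\'e series of the abstractly presented algebra $\A$ exactly, which means proving that the forest normal forms are linearly independent (not merely spanning). You flag this yourself as the ``main obstacle,'' and it is a genuine one; the paper never has to face it. Because the paper already knows the exact rank of $H^i(\psig_n,\Z)$ from Leray--Hirsch, and because Corollary~\ref{cor:onedimintegral} (which uses Leray--Hirsch together with \fullref{prop:outproductstruct}) shows $H^*(\psig_n,\Z)$ is generated by the one-dimensional classes, the paper has a \emph{surjection} $\A \twoheadrightarrow H^*(\psig_n,\Z)$ with the target of known rank $\binom{n-1}{i}n^i$. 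Then only the easy ``spanning'' direction of the forest combinatorics (\fullref{lem:boundranks}) is needed: the forest monomials span $\A^i$ and number exactly $\binom{n-1}{i}n^i$, so the surjection must be an isomorphism. In your version the map $\A \to H^*(\psig_n,\Z)$ is a ring homomorphism whose surjectivity you never establish (you would also need that $H^*(\psig_n,\Z)$ is generated by one-dimensional classes, a fact the paper extracts from Leray--Hirsch); without either surjectivity of that map or injectivity, matching Poincar\'e series bounds on the two sides does not yield an isomorphism. The fix is to prove the totally-non-homologous-to-zero lemma and run the squeeze in the other direction, which sidesteps the independence question entirely.
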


\begin{rem}
In the late 1990s, Bogley and Krsti\'c constructed a $K(\psig_n, 1)$
whose universal cover embeds in Culler and Vogtmann's Outer Space.
Using this space they were able to establish the Brownstein--Lee
conjecture, but regrettably this work has not appeared.
\end{rem}

\begin{rem}
There are prior results on the (asymptotic) cohomology of $\psig_n$.
In Brady et al \cite{BMMM01} the cohomology of $\opsig_n$ and
$\psig_n$ with group ring coefficients are determined, and as a
corollary, it is shown that $\opsig_n$ and $\psig_n$ are duality
groups.  The $\ell^2$--cohomology is computed by the last two authors
in \cite{McMe04}, where it is shown that the $\ell^2$--cohomology of
$\opsig_n$ and $\psig_n$ is non-trivial and concentrated in top
dimension.  Alexandra Pettet has recently posted an article explaining
the complexity of the kernel of the natural ``forgetful" map $\psig_n
\twoheadrightarrow \psig_{n-1}$ \cite{Pe06}.
\end{rem}

\begin{ack}
We thank Ethan Berkove, Benson Farb, Allen Hatcher and Alexandra
Pettet for their interest and insights into this work.  We
particularly thank Fred Cohen for sharing some of his work with Jon
Pakianathan on an interesting subgroup of $\psig_n$, and for pointing
out a number of connections between this work and results in the
literature.  (See the end of \fullref{sec:cohopsig}.)
\end{ack}

\section{$\mbox{P}\Sigma_n$, $\mbox{OP}\Sigma_n$ and the complex $\MM_n$}
\label{sec:background}

This section moves at a brisk pace.  The reader completely unfamiliar
with the groups $\psig_n$ and the McCullough--Miller complexes $\MM_n$
should perhaps read the first six sections of \cite{McMe04} where the
material summarized in this section is developed in greater detail.

On the intuitive level, $\psig_n$ is the group of motions of $n$
unknotted, unlinked circles in the $3$--sphere.  In order to be more
precise and efficient, we present $\psig_n$ as a subgroup of the
automorphism group of a free group $\F_n$ (for background on this
isomorphism, see Goldsmith \cite{Go81}).  The group of \emph{pure
symmetric automorphisms} of $\F_n$ consists of all automorphisms that,
for a fixed basis $\{x_1, \ldots, x_n\}$, send each $x_i$ to a
conjugate of itself.  This group is generated by the automorphisms
$\alpha_{ij}$ induced by
\[
\alpha_{ij} = \left\{
\begin{array}{lc}
x_i \rightarrow x_j x_i x_j^{-1} & \cr
x_k \rightarrow x_k & k \ne i\,.\cr
\end{array}  \right.
\]
McCool proved that the relations
\[
\left\{
\begin{array}{ll}
[\alpha_{ij}, \alpha_{kl}] & i, j, k \mbox{ and } l\mbox{ all distinct}\cr
[\alpha_{ij}, \alpha_{kj}] &  i, j\mbox{ and }k\mbox{ distinct}\cr
[\alpha_{ij}, \alpha_{ik}\alpha_{jk}] & i, j\mbox{ and }k\mbox{ distinct}\cr
\end{array}
\right\}
\]
are sufficient to present $\psig_n$ \cite{McC86}.  Given $j \in [n]$
and $I \subset [n]\setminus \{j\}$ we let $\alpha_{Ij}$ denote the
product of generators
\[
\alpha_{Ij} = \prod_{i \in I} \alpha_{ij}\ .
\]
Using McCool's relations one sees that this product is independent of
the order in which one lists the $\alpha_{ij}$.  Note that when $I=
[n]\setminus \{j\}$, the element $\alpha_{Ij}$ is simply conjugation
by $x_j$.

For the remainder of this paper we view $\psig_n$ as a subgroup of
$\mbox{Aut}(\F_n)$.  Since the inner automorphisms,
$\mbox{Inn}(\F_n)$, form a subgroup of $\psig_n$, we may form the
quotient $\psig_n/\mbox{Inn}(\F_n)$, which we denote $\opsig_n$.
Interestingly, $\psig_n$ is a subgroup of the famous $\mbox{IA}_n =
\mbox{Ker}[\mbox{Aut}(\F_n) \twoheadrightarrow GL_n(\Z)]$.  In fact,
the set $\{\alpha_{ij}\}$ is a subset of the standard generating set
of $\mbox{IA}_n$ discovered by Magnus.  The image of $\psig_n$
contains the image of the pure braid group, $P_n$, under Artin's
embedding, hence there are proper inclusions
\[
P_n < \psig_n < \mbox{IA}_n < \mbox{Aut}(\F_n).
\]
Viewed as a group of free group automorphisms, there is an action of
$\psig_n$ on a contractible complex constructed by McCullough and
Miller \cite{McMi96}.  Our perspective on this complex is that of
\cite{McMe04} where it is described in terms of marked, $[n]$--labelled
hypertrees.  We quickly recall the definition of the $[n]$--labelled
hypertree poset.

\begin{defn}
A \emph{hypergraph} $\Gamma$ consists of a set of vertices $V$ and a
set of hyperedges $E$, each element of $E$ containing at least two
vertices.  We refer to edges in a hypergraph which contain more than
two vertices as \emph{fat} edges.  A \emph{walk} in a hypergraph
$\Gamma$ is a sequence $v_0,e_1,v_1,\ldots,v_{n-1},e_n,v_n$ where for
all $i$, $v_i \in V$, $e_i \in E$ and for each $e_i$,
$\{v_{i-1},v_i\}\subset e_i$.  A hypergraph is \emph{connected} if
every pair of vertices is joined by a walk.  A \emph{simple cycle} is
a walk that contains at least two edges, all of the $e_i$ are distinct
and all of the $v_i$ are distinct except $v_0=v_n$.  A hypergraph with
no simple cycles is a \emph{hyperforest} and a connected hyperforest
is a \emph{hypertree}.  Note that the no simple cycle condition
implies that distinct edges in $\Gamma$ have at most one vertex in
common

An \emph{$[n]$--labelled} hypertree is a hypertree whose vertices have
been labelled (bijectively) by $[n]=\{1, \ldots, n\}$.  Examples of
$[n]$--labelled hypertrees can be found in most figures in this paper.
The \emph{rank} of a hypertree $\tau$ is $\#E(\tau) - 1$.

For any fixed value of $n$, one can define a partial order on the set
of all $[n]$--labelled hypertrees: $\tau \le \tau'$ if every edge of
$\tau'$ is contained in an edge of $\tau$.  The poset consisting of
$[n]$--labelled hypertrees with this partial ordering is denoted
$\hyper_n$ and is called the \emph{hypertree poset}.  In
\fullref{fig:maxchain} we show a maximal chain in $\hyper_6$.
Notice that maximal elements in $\hyper_n$ correspond to ordinary,
that is simplicial, trees on $[n]$.
\end{defn}

\begin{figure}[ht!]
\begin{center}
  \includegraphics[width=4.5in]{\figdir/maxchain}
 \caption{A maximal chain in $\hyper_6$}
 \label{fig:maxchain}
\end{center}
\end{figure}

The combinatorics of $\hyper_n$ are interesting (see \cite{BMMM01},
\cite{JeMcMeEuler} and \cite{McMe04}), but by and large we will need
few previously established combinatorial facts about this poset.  One
fact that we will use is:

\begin{lem}\label{lem:hyperhasmeets}
Given a collection of hypertrees $\{ \tau_1 , \ldots, \tau_k\}$ in
$\hyper_n$ there is a unique, maximal hypertree $\tau$ such that $\tau
\le \tau_i$ for each $i$.  In other words, $\hyper_n$ is a meet
semi-lattice.
\end{lem}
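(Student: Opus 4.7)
The plan is to construct the meet explicitly via the block decomposition of an auxiliary graph. First I would reformulate the partial order: to each hypertree $\tau$ associate the set $R_\tau \subseteq \binom{[n]}{2}$ of unordered pairs of vertices that co-occur in some edge of $\tau$, and show that $\tau \le \tau'$ if and only if $R_\tau \supseteq R_{\tau'}$. One direction is immediate. The reverse uses a \emph{clique lemma}: if every pair from a subset $S \subseteq [n]$ lies in some edge of $\tau$, then all of $S$ lies in a single edge. The base case $|S| = 3$ is where the hypertree axiom bites---three vertices whose three pairs were distributed across three distinct edges of $\tau$ would form a simple 3-cycle, while two edges meeting in $\ge 2$ vertices must coincide; larger $S$ follows by iterating.

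With this reformulation, the meet of $\{\tau_1, \ldots, \tau_k\}$ is built as follows. Form the ordinary graph $G$ on vertex set $[n]$ with edge set $\bigcup_i R_{\tau_i}$. The graph $G$ is connected, so it admits a block decomposition; let $\tau$ be the hypergraph whose hyperedges are the vertex sets of the blocks (maximal 2-connected subgraphs and bridges) of $G$. Distinct blocks of $G$ meet in at most one (cut) vertex, and the block-cut structure is a tree, so $\tau$ is itself a hypertree. It is a common lower bound: each edge $e$ of $\tau_i$ induces a clique of size $\ge 2$ in $G$, which lies in a single block, hence in a single hyperedge of $\tau$.

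For maximality, suppose $\sigma \le \tau_i$ for each $i$; then $R_\sigma \supseteq \bigcup_i R_{\tau_i} = E(G)$, and I need $R_\sigma \supseteq R_\tau$. Given $\{u,v\} \in R_\tau$, the vertices $u$ and $v$ lie in the vertex set of a common block $B$ of $G$. If $B$ is a bridge then $\{u,v\} \in E(G) \subseteq R_\sigma$; otherwise $B$ is 2-connected and $u, v$ lie on a common cycle $u = w_0, w_1, \ldots, w_{\ell-1}, w_0$ of $G$, each consecutive pair witnessed by some edge $h_i$ of $\sigma$.

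The main obstacle is then a \emph{cycle lemma} inside $\sigma$: if $w_0, \ldots, w_{\ell-1}$ are distinct vertices of a hypertree $\sigma$ and each cyclic consecutive pair shares some edge $h_i$, then all the $w_i$ lie in a single edge of $\sigma$. I plan to prove this by induction on $\ell$. The cases $\ell = 2, 3$ follow from the two-intersection rule and the triangle argument above. For $\ell \ge 4$, if two of the $h_i$ coincide, a shortening (when they are cyclically consecutive) or splitting (producing two shorter closed walks that share the common edge $h_i$) reduces to the inductive hypothesis; if no two of the $h_i$ coincide, the walk $w_0, h_0, w_1, \ldots, h_{\ell-1}, w_0$ is itself a simple cycle in $\sigma$, contradicting that $\sigma$ is a hypertree. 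Applying this lemma places $u$ and $v$ in a common edge of $\sigma$, so $\{u,v\} \in R_\sigma$, and the maximality (hence uniqueness) of the meet follows.
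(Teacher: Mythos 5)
The paper does not actually prove this lemma; it is stated as a previously established combinatorial fact (with pointers to the earlier work of Brady--Meier--McCammond--Miller, Jensen--McCammond--Meier, and McCammond--Meier), so there is no in-paper argument to compare yours against. Your proof is correct and self-contained. The reformulation $\tau \le \tau' \Leftrightarrow R_\tau \supseteq R_{\tau'}$ is sound: the forward direction is immediate, and the reverse direction is exactly your clique lemma, whose $|S|=3$ base case correctly invokes the no-simple-cycle axiom (three distinct witnessing edges would give a simple $3$--cycle, and if any two of the three witnessing edges coincide the three vertices already share an edge), and whose $|S|\ge 4$ step rides on the rule that distinct hyperedges of a hypertree share at most one vertex. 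The block construction of the meet also checks out: $G$ is connected since each $R_{\tau_i}$ is a connected graph on $[n]$; every block of the connected graph $G$ has at least two vertices, so the hyperedges are legitimate; a simple cycle among blocks would yield a cycle in the tree-shaped block-cut graph of $G$; and each hyperedge of a $\tau_i$ spans a clique in $G$ and so lies in a single block, giving $\tau \le \tau_i$. For maximality, your cycle lemma is the crux, and each of the three subcases in the $\ell\ge 4$ induction works: in the shortening and splitting steps the two-intersection rule identifies the inductively obtained hyperedge(s) with $h_i$ (respectively $h_i = h_j$), which is what recovers the omitted vertex or glues the two halves into a single edge; and when all $h_i$ are distinct you get a genuine simple cycle in $\sigma$, a contradiction. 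The one point worth stating explicitly is that a non-bridge block has at least three vertices, so the standard fact that any two vertices of a $2$--connected graph lie on a common cycle applies; you have implicitly covered the two-vertex case by treating bridges separately.
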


The McCullough--Miller complex $\MM_n$ was introduced in \cite{McMi96}
for the study of certain automorphism groups of free products.  The
main results we need in this paper are summarized in the following
Theorem.

\begin{thm}{\rm\cite{McMi96}}\label{thm:mc-miller-stuff}\qua
The space $\MM_n$ is a contractible simplicial complex admitting a
simplicial $\opsig_n$ action such that:
\begin{enumerate}
\item The fundamental domain is a strong fundamental domain and the
quotient $\opsig_n \backslash \MM_n$ is isomorphic to the geometric
realization of the poset $\hyper_n$.
\item If $\sigma$ is a simplex of $|\hyper_n|$ corresponding to a
chain $\tau_0 < \tau_1 < \cdots < \tau_k$, then
\[
\mbox{Stab}(\sigma) = \mbox{Stab}(\tau_0) \cong \Z^{\mbox{\scriptsize
    rk}(\tau_0)} 
\]
\item Let $j \in [n]$ and let $I$ be a subset of $[n]$ corresponding
to all the labels of a connected component of $\tau$ minus the vertex
labelled $j$.  Then the outer-automorphism given by the automorphism
$\alpha_{Ij}$ is contained in $\mbox{Stab}(\tau)$ and the collection
of all such outer-automorphisms is a generating set for
$\mbox{Stab}(\tau)$.
\end{enumerate}
\end{thm}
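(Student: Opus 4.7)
The plan is to build $\MM_n$ explicitly as a simplicial complex of \emph{marked hypertrees} in the style of \cite{McMi96}: a $k$--simplex is a chain $\tau_0 < \tau_1 < \cdots < \tau_k$ of $[n]$--labelled hypertrees decorated with a marking that tracks the vertex labels in $\opsig_n$. The group $\opsig_n$ acts by altering only the marking, so the forgetful map $\MM_n \to |\hyper_n|$ is simplicial with $\opsig_n$--orbits as fibres; this immediately yields the identification $\opsig_n\backslash\MM_n \cong |\hyper_n|$ of part~(1). The fundamental domain is \emph{strong} because the stabilizer of any simplex of $\MM_n$ acts trivially on its interior --- an easy check once the marking data on the simplex is written out explicitly.

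For parts~(2) and~(3), the first step is to show that along a refinement chain $\tau_0 < \cdots < \tau_k$ the stabilizers nest in such a way that their intersection is $\Stab(\tau_0)$. Individually, $\Stab(\tau)$ is then identified by exhibiting an explicit generating set: for each vertex $j \in [n]$ and each connected component $I$ of $\tau$ after deleting $j$, the automorphism $\alpha_{Ij}$ preserves the marked structure. Using McCool's commutation relations together with the fact that the elements $\alpha_{[n]\setminus\{j\},j}$ (conjugation by $x_j$) vanish in $\opsig_n$, one reduces this collection to a free abelian basis of size $\#E(\tau) - 1 = \rk(\tau)$, establishing $\Stab(\tau) \cong \Z^{\rk(\tau)}$ and the description of generators in~(3) simultaneously.

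The main obstacle is contractibility of $\MM_n$. The approach of \cite{McMi96} is a Whitehead--style peak--reduction: one assigns to each point of $\MM_n$ a complexity measuring its ``distance'' from a chosen basepoint, then shows that this complexity can be strictly decreased by a canonical elementary move unless one is already at the basepoint. Patching these moves together yields a deformation retraction of $\MM_n$ onto a single point. This step is significantly more technical than the others and relies on the theory of Whitehead automorphisms of free products, whereas the stabilizer and quotient assertions are essentially bookkeeping once the marked--hypertree model is in place.
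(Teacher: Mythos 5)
This theorem is not proved in the paper at all: it carries the explicit attribution \cite{McMi96} in its heading, and the surrounding text says the authors are merely ``summarizing'' the McCullough--Miller results they need (with the stabilizer description traced to Proposition~5.1 of \cite{McMi96}). So there is no internal proof against which to compare your proposal; you are reconstructing the argument of the cited reference rather than anything in this paper. As a sketch of that reference your outline is reasonable in spirit: the complex is built from marked objects (McCullough and Miller phrase things in terms of marked based partitions / labelled bipartite trees; the hypertree language used here is the reformulation from \cite{McMe04}), the $\opsig_n$--action moves markings so that the forgetful map realizes $|\hyper_n|$ as the quotient, the stabilizers are computed from the explicit generators $\balpha_{Ij}$ and the relation that full conjugations die in $\Out$, giving rank $\#E(\tau)-1$, and contractibility is established by a Culler--Vogtmann/Whitehead peak-reduction argument for Fouxe--Rabinovitch automorphisms of free products.

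One genuine inaccuracy worth flagging: your justification for the fundamental domain being \emph{strong} --- that ``the stabilizer of any simplex acts trivially on its interior'' --- is not what strongness means. That condition only guarantees the action is without inversions, so the quotient inherits a simplicial structure; it holds quite generally. A strong (strict) fundamental domain is a subcomplex $F$ that maps isomorphically onto $\opsig_n\backslash\MM_n$, i.e.\ $F$ meets each orbit of simplices exactly once and is closed under taking faces. Proving that requires checking that two simplices of the chosen lift of $|\hyper_n|$ are never in the same $\opsig_n$--orbit, which comes out of the marking data rather than from the interior-action condition. Separately, when you say the stabilizers ``nest so that their intersection is $\Stab(\tau_0)$,'' you should make explicit which way the nesting goes: with the poset convention here, $\tau_0$ is the coarsest hypertree in the chain (lowest rank), and the marked lift of the coarser object has the \emph{smaller} stabilizer, so $\Stab(\tau_0) \subseteq \Stab(\tau_i)$; the equality $\Stab(\sigma) = \Stab(\tau_0)$ then follows.
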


The generating set given above is not a minimal generating set.
Consider for example the maximal hypertree shown in
\fullref{fig:maxchain}.  If we let $\balpha$ denote the image of
$\alpha \in \psig_n$ in $\opsig_n$, then we have $\balpha_{45},
\balpha_{65}$ and $\balpha_{\{1,2,3\},5}$ are all in
$\mbox{Stab}(\tau)$.  But
\[
\balpha_{45} \cdot \balpha_{65} =
\left[\balpha_{\{1,2,3\},5}\right]^{-1} \mbox{ in }\opsig_6 
\]
as $\alpha_{45} \cdot  \alpha_{65} \cdot \alpha_{\{1,2,3\},5}$ is an
inner automorphism.

Using the equivariant spectral sequence, applied to the action
$\opsig_n \curvearrowright \MM_n$, we compute the cohomology groups
$H^i(\opsig_n, \Z)$.  Recall that the equivariant spectral sequence
for a group $G$ acting simplicially on a contractible complex is given
by
\[
E_1^{pq} = \prod_{\sigma \in {\mathcal E}_p} H^q (G_\sigma, M)
\Rightarrow H^{p + q}(G, M)
\]
where ${\mathcal E}_p$ denotes a set of representatives of the
$G$--orbits of $p$--cells.  The differentials on the $E_1$ page are the
standard ones of the equivariant cohomology spectral sequence, namely
a combination of restriction maps to a subgroup and coboundary maps.
(See \S VII.7 of \cite{Br94}.)  Since the simplex stabilizers for the
action of $\opsig_n \curvearrowright \MM_n$ are free abelian, and our
set of representatives of $\opsig_n$--orbits can be taken to be
$p$--simplices in the geometric realization $|\hyper_n|$, we can
exhibit the first page of the equivariant spectral sequence in a
fairly concrete manner.

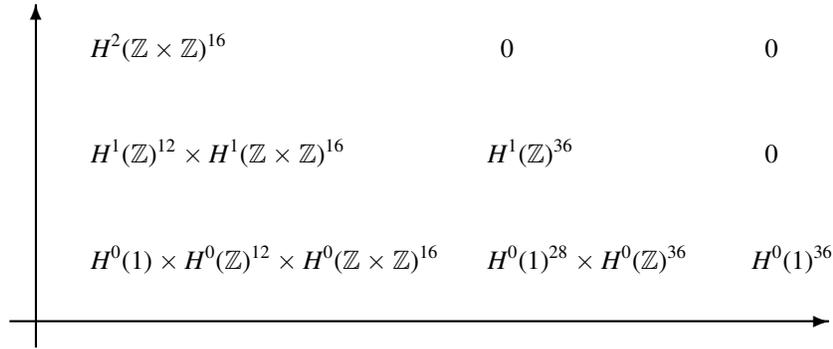
\begin{figure}[ht!]\small
\begin{center}
\begin{picture}(320,140)(-10, -10)
\thicklines
\put(-10,0){\vector(1,0){310}}
\put(0,-10){\vector(0,1){130}}
%
\put(20,20){$\displaystyle H^0(1) \times
	H^0(\Z)^{12} \times H^0(\Z \times \Z)^{16}$}
\put(20,60){$\displaystyle 
	H^1(\Z)^{12} \times H^1(\Z \times \Z)^{16}$}
\put(20,100){$\displaystyle  H^2(\Z \times \Z)^{16}$}

\put(170,20){$\displaystyle H^0(1)^{28} \times
	H^0(\Z)^{36}$}
\put(170,60){$\displaystyle 
	H^1(\Z)^{36}$}
\put(170,100){$\ \ 0$}
\put(270,20){$\displaystyle H^0(1)^{36}$}
\put(270,60){$\ \ 0$}
\put(270,100){$\ \ 0$}
\end{picture}
\end{center}
 \caption{The $E_1$ page of the equivariant spectral sequence for
the action  $\opsig_4 \curvearrowright \MM_4$}
 \label{fig:ssequiv}
 \end{figure}

For example, in \fullref{fig:ssequiv} we show the non-zero portion
of the $E_1$ page for the action of $\opsig_4$, where we have
suppressed the $\Z$--coefficients.  The geometric realization of
$\hyper_4$ was worked out as an example in \cite{McMi96}.  We redraw
their figure in \fullref{fig:HT4}.  The actual geometric
realization has dimension $2$, but the vertex corresponding to the
hypertree with exactly one edge forms a cone point in $|\hyper_4|$ and
so it is not shown in the figure.  The left edge of the $E_1$ page is
explained by the following observations: The $\opsig_4$ orbits of
vertices under the action $\opsig_4 \curvearrowright \MM_4$ correspond
to the elements of $\hyper_4$, and by direct observation one sees:
\begin{itemize}
\item There is one hypertree in $\hyper_4$ whose stabilizer is
trivial.  It is the hypertree with a single hyperedge, not shown in
\fullref{fig:HT4}.
\item There are twelve hypertrees whose stabilizers are $\cong \Z$.
These all have the same combinatorial type, shown in the top right of
\fullref{fig:HT4}.
\item The remaining sixteen hypertrees have stabilizers $\cong \Z
\times \Z$.
\end{itemize}
The interested reader may use \fullref{fig:HT4} to double-check our
entries in \fullref{fig:ssequiv}.

\begin{figure}[ht!]
\begin{center}
  \includegraphics[width=11.5cm]{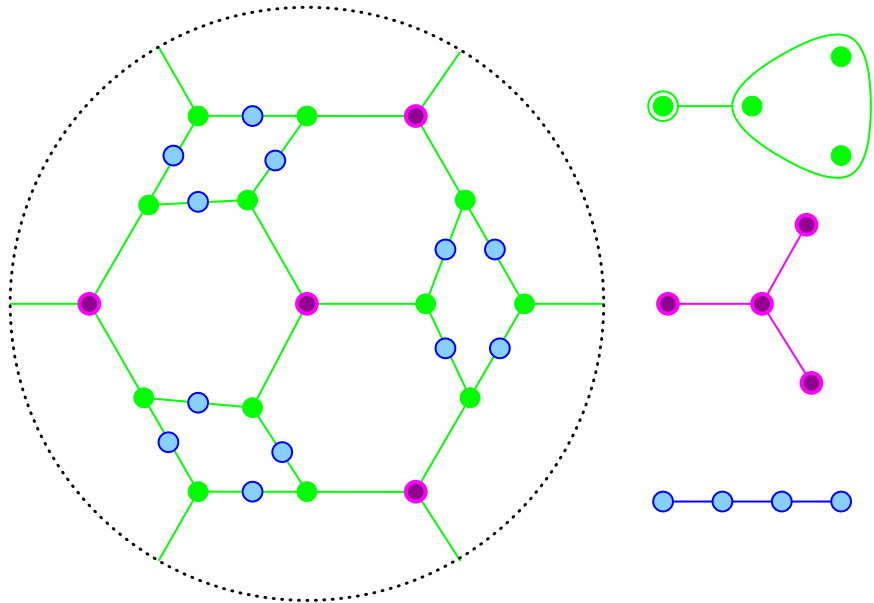}
 \caption{The geometric realization of $\hyper_4$, excluding the
 vertex corresponding to a hypertree with a single edge.  The graph is
 to be thought of as embedded in a projective plane, so edges leaving
 the dotted circle re-enter at the opposite point.  The color coding
 is by combinatorial type.}
 \label{fig:HT4}
\end{center}
\end{figure}

While we do give a concrete description of the $E_1$ page of this
spectral sequence, the reader should not be lulled into thinking that
in general this page is always directly accessible.  For example,
Cayley's formula states that the number of (simplicial) trees on $[n]$
is $n^{n-2}$.  This implies that the $(0,n-2)$--entry on the $E_1$ page
for the action $\opsig_n \curvearrowright \MM_n$ is
$H^{n-2}(\Z^{n-2})^{n^{n-2}}$ and this is most certainly not the
largest entry on the page.

\section{Choosing a basis}

We are able to understand the contents of the $E_1$ page of the
equivariant spectral sequence by finding a concrete, minimal set of
generators for each stabilizer of each simplex in $|\hyper_n|$.
Recall that the stabilizer of a simplex is the stabilizer of its
minimal vertex, hence it suffices to pick generators for the
stabilizers of individual hypertrees in the poset.  Further, the
stabilizer of a hypertree $\tau$ under the action of $\opsig_n$ is
free abelian of rank ${\rk(\tau)}$.  As an example, the stabilizer
under the action of $\opsig_6$ of the hypertree with two edges in
\fullref{fig:maxchain} is infinite cyclic.  It is generated by the
image of $\alpha_{\{1,2\},3}$ in $\opsig_6$, or equivalently by the
image of $\alpha_{\{4,5,6\},3}$.

For a fixed hypertree $\tau$ the chosen set of generators is
essentially described by the following conditions: You pick generators
as listed in \fullref{thm:mc-miller-stuff} except,
\begin{enumerate}
\item you never conjugate $x_1$, and 
\item you never conjugate $x_2$ by $x_1$.
\end{enumerate}
We now make this precise.

\begin{defn}
For each $j \in [n]$ let $\hat j$ be the index that is to be avoided
when conjugating by $x_j$.  That is, $\hat{j} = 1$ for $j \ne 1$ and
$\hat{1} = 2$.  Define a \emph{one--two automorphism} to be any
automorphism of the form $ \alpha_{Ij} = \prod_{i \in I} \alpha_{ij},
$ where $I \subset \{[n]\setminus\{j, \hat{j}\}\}$.  The image of
$\alpha_{Ij}$ in $\opsig_n$ is denoted $\balpha_{Ij}$ and is called a
\emph{one--two outer-automorphism}.

For a given hypertree $\tau$ the collection of all one--two
outer-automorphisms in $\mbox{Stab}(\tau)$ is the \emph{one--two basis}
for $\Stab(\tau)$.  We denote this by $\B(\tau)$.  If $\sigma$ is a
simplex in $|\hyper_n|$ corresponding to a chain $\tau_0 < \tau_1 <
\cdots < \tau_k$ then the stabilizer of $\sigma$ is the stabilizer of
$\tau_0$ and so we define $\B(\sigma) = \B(\tau_0)$.  A simplex
$\sigma$ in $|\hyper_n|$ is in the \emph{support} of a one--two
outer-automorphism $\balpha_{Ij}$ if $\balpha_{Ij} \in \B(\sigma)$.
\end{defn}

If one roots a hypertree $\tau$ at the vertex labelled $1$, then the
one--two basis for $\tau$ can be viewed as being (partly) induced by
``gravity".  The prescription to avoid conjugating $x_1$ by an $x_j$
corresponds to having $x_j$ conjugate the elements corresponding to
vertices below the vertex labelled $j$.  The exception is if the
vertex labelled $1$ has valence greater than one, in which case it
conjugates the elements below it, but not those in the branch
containing the vertex labelled by $2$.  Thus, the one--two basis for
the stabilizer of the hypertree shown in \fullref{fig:gravity} is
\[
\B(\tau) = \{\balpha_{\{4, 5, 8, 9, 10, 11\}, 1}, \balpha_{\{6, 7\},2}, 
\balpha_{\{8, 9\}, 4},\balpha_{\{10, 11\}, 5}\}\,.
\]
\begin{figure}[ht!]
\begin{center}
  \includegraphics[width=4.5in]{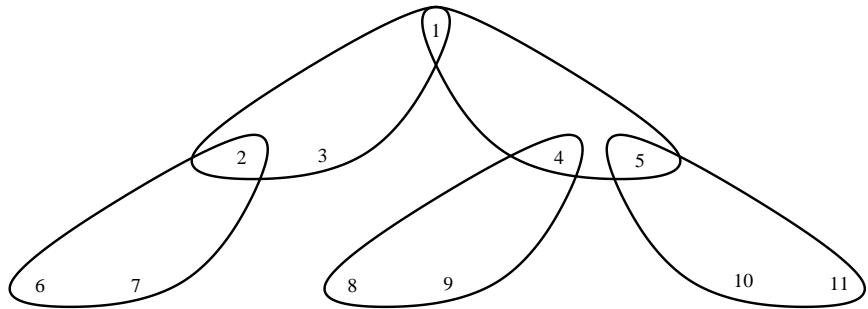}
 \caption{Rooting a hypertree at the vertex labelled $1$}
 \label{fig:gravity}
\end{center}
\end{figure}
The size of a one--two basis depends only on the rank of the underlying
hypertree, but the actual elements in the one--two basis very much
depend on the labelling.  Two rank-two, labelled hypertrees are shown
in \fullref{fig:rk2hyper}.  The underlying hypertrees are the same,
but the labels of the vertices are different.  Both have stabilizers
isomorphic to $\Z \times \Z$.  The one--two generating set for the
stabilizer of the labelled hypertree on the left is
$\{\balpha_{\{5,6\},1}, \balpha_{\{2,3\},4}\}$, while the generating
set for the stabilizer of the hypertree on the right is
$\{\balpha_{\{2,4,5\},3}, \balpha_{\{2, 4\},5} \}$.

\begin{figure}[ht!]
\begin{center}
  \includegraphics[width=12cm]{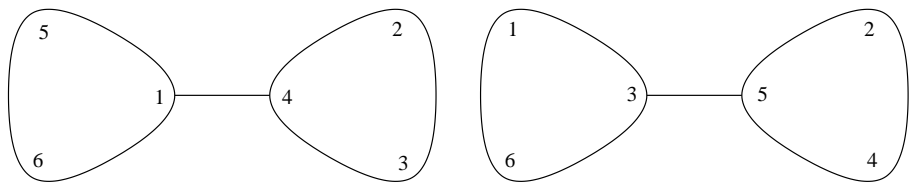}
 \caption{Two rank two labelled hypertrees in $\hyper_6$}
 \label{fig:rk2hyper}
\end{center}
\end{figure}

Finally, one last example: The stabilizer of the highest rank
hypertree in \fullref{fig:maxchain} is $\cong \Z^4$ and the one--two
generating set for $\Stab(\tau)$ is
\[
\B(\tau) = \{\balpha_{\{3, 4, 5, 6\},1}, \balpha_{\{4, 5, 6\},3},
\balpha_{4,5}, \balpha_{6,5}\}\ .
\] 
Proposition~5.1 of \cite{McMi96} gives a concrete description of the
stabilizers of vertices, from which one can derive:

\begin{lem}\label{lem:onetwobasis}
The set $\B(\tau)$ is a minimal rank generating set for
$\mbox{Stab}(\tau)$.
\end{lem}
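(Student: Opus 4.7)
The plan is to prove the lemma by showing two things: that $\B(\tau)$ has exactly $\rk(\tau)$ elements, and that it generates $\Stab(\tau)$. Since \fullref{thm:mc-miller-stuff} tells us $\Stab(\tau)\cong\Z^{\rk(\tau)}$, a generating set of size $\rk(\tau)$ is automatically minimal.

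For the cardinality count, I would first record the standard identity for a hypertree $\tau$ with $n$ vertices, namely $\sum_{e\in E(\tau)}(|e|-1)=n-1$, which rearranges to $\sum_{e}|e|=n-1+\#E(\tau)$. Next, I would observe that removing a vertex $j$ from $\tau$ produces exactly $v_j$ connected components, where $v_j$ is the number of hyperedges of $\tau$ containing $j$; this is a direct consequence of the no-simple-cycle condition in the definition of a hypertree. Consequently, the full McCullough--Miller generating set from \fullref{thm:mc-miller-stuff}(3) has size $\sum_{j\in[n]} v_j = \sum_{e} |e| = n-1+\#E(\tau)$. The one--two basis $\B(\tau)$ is obtained from this set by discarding, for each vertex $j$, precisely the one generator $\balpha_{I^{*}j}$ whose component $I^{*}$ contains the forbidden index $\hat j$ (such an $I^{*}$ exists and is unique because $\hat j\ne j$ lies in exactly one component of $\tau\setminus\{j\}$). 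Hence $|\B(\tau)|=\sum_j(v_j-1)=\#E(\tau)-1=\rk(\tau)$.

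For generation, the key input is the relation
\[
\prod_{I}\balpha_{Ij}=\overline{1}\quad\text{in }\opsig_n,
\]
where the product runs over all components $I$ of $\tau\setminus\{j\}$. Indeed, these components partition $[n]\setminus\{j\}$, so the corresponding product in $\psig_n$ is $\alpha_{[n]\setminus\{j\},j}$, which is conjugation by $x_j$ and therefore trivial in the outer-automorphism quotient. Solving this relation for the omitted element $\balpha_{I^{*}j}$ expresses it as a sum of the elements of $\B(\tau)$, so $\B(\tau)$ generates the same subgroup as the full McCullough--Miller set, namely $\Stab(\tau)$.

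Combining the two paragraphs, $\B(\tau)$ generates the free abelian group $\Stab(\tau)\cong\Z^{\rk(\tau)}$ using exactly $\rk(\tau)$ elements, and is therefore a minimal-rank generating set. The only step with any subtlety is verifying that removing $j$ produces exactly $v_j$ components and that $\hat j$ lies in a unique such component; both follow quickly from the hypertree axioms, so I do not anticipate a genuine obstacle here. The combinatorial identity $\sum_e(|e|-1)=n-1$ can be cited directly, since it is used elsewhere in the paper and proved in \cite{McMe04}.
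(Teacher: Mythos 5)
Your proof is correct and supplies exactly the derivation the paper delegates to Proposition 5.1 of McCullough--Miller: the cardinality count $|\B(\tau)| = \sum_j (v_j-1) = \#E(\tau)-1 = \rk(\tau)$, generation via the relation $\prod_I \balpha_{Ij} = \bar 1$ (coming from $\alpha_{[n]\setminus\{j\},j}$ being inner), and minimality from $\Stab(\tau)\cong\Z^{\rk(\tau)}$. This is the argument the paper has in mind but leaves unspoken.
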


(Hence our use of the term ``basis".)

A one--two outer-automorphism $\balpha_{Ij}$ may be in the stabilizer
of a simplex without being part of the one--two basis for the simplex.
Consider for example the hypertrees shown in \fullref{fig:nongood}.
The stabilizer of the rank one hypertree on the left is generated by
$\B(\tau) = \{\balpha_{\{5, 6, 7\}, 4}\}$.  However, if $\tau'$ is the
hypertree on the right, then $\B(\tau') = \{\balpha_{54},
\balpha_{\{6, 7\}, 4}, \balpha_{76}\}$.  Thus $\balpha_{\{5, 6, 7\},
4} \in \mbox{Stab}(\tau')$ but $\balpha_{\{5, 6, 7\}, 4} \not \in
\B(\tau')$.

\begin{figure}[ht!]
\begin{center}
  \includegraphics[width=12.0cm]{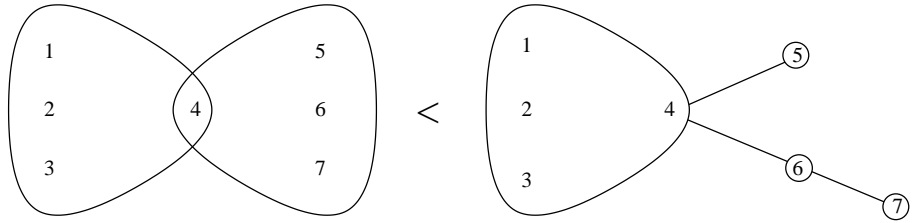}
 \caption{The outer-automorphism $\balpha_{\{5, 6, 7\}, 4}$ is
 contained in the stabilizer of both hypertrees.  It is in the one--two
 basis of the hypertree on the left, but it is not in the basis of the
 hypertree on the right.  (It is in the basis of the 1--simplex
 corresponding to the chain $\tau < \tau'$.)}
 \label{fig:nongood}
\end{center}
\end{figure}

The distinction between being in the stabilizer and being in the
one--two basis can be characterized combinatorially by removing
vertices.  Given any hypertree $\tau$ and a vertex $j$, we can
construct a new hypergraph on $[n]\setminus\{j\}$ by simply removing
$j$ from each hyperedge set and then removing any singleton sets that
result.  Let $\balpha_{Ij}$ be a one--two outer automorphism, let
$\tau$ be a hypertree and let $\tau'$ be the hyperforest obtained by
removing $j$ from the vertex set.  The stabilizer of $\tau$ contains
$\balpha_{Ij}$ if and only if $I$ is a union of vertices of connected
components of $\tau'$ and the one--two basis of $\tau$ contains
$\balpha_{Ij}$ if and only if $I$ is the vertex set of single
connected component of $\tau'$.

Here's the lovely fact.  Every simplex in $|\hyper_n|$ that supports a
one--two outer-automor\-phism $\balpha_{Ij}$ is compatible with the
hypertree consisting of two edges---$I \cup \{j\}$ and $[n] \setminus
I$---joined along the vertex labelled $j$.  Moreover, the one--two
basis for this two edge hypertree is $\{\balpha_{Ij}\}$.  (See
\fullref{fig:conepoint}.)  These facts extend to all subsets of
one--two bases (\fullref{lem:cone-prop}).

\begin{figure}[ht!]
\begin{center}
\labellist
\pinlabel $[n]\setminus\{j,I\}$ at 87 146
\pinlabel $j$ at 248 146
\pinlabel $I$ at 397 146
\endlabellist
\includegraphics[width=2.7in]{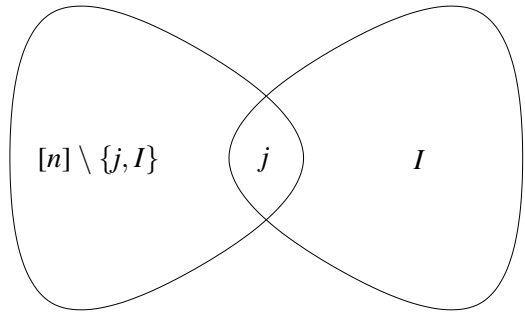}
 \caption{The cone point for a one--two basis element}
 \label{fig:conepoint}
\end{center}
\end{figure}

\begin{defn}
A collection $\A$ of one--two outer-automorphisms is called
\emph{compatible} if it is a subset of a one--two basis for some
hypertree $\tau$ ($\A \subset \B(\tau)$).  (The notion of a compatible
collection of one--two outer-automorphisms is essentially the same as
McCullough and Miller's notion of pairwise disjoint based partitions
of $[n]$ in \cite{McMi96}.)
\end{defn}

\begin{defn}
Let $\A$ be a compatible collection of one--two outer-automorphisms.
The collection of hypertrees that support $\A$ is the
\emph{$\A$--core}.  Call an element of the hypertree poset that
contains the set $\A$ in its stabilizer---but \underbar{does not
support it} in terms of being part of the one--two basis for the
hypertree---an \emph{$\A$--peripheral} hypertree.  Further, call the
subcomplex induced by the $\A$--core hypertrees the \emph{$\A$--core
complex}, and the subcomplex induced by $\A$--peripheral hypertrees the
\emph{$\A$--peripheral complex}.
\end{defn}

The fact that the hypertree poset has meets (\fullref{lem:hyperhasmeets})
implies the following:

\begin{lem}\label{lem:conepoint}
If $\A$ is a compatible collection of one--two outer-automorphisms then
there is a hypertree $\tau(\A)$, called the \emph{cone point of $\A$}
such that if $\A \subset \Stab(\tau')$ then $\tau(\A) \le \tau'$. 
\end{lem}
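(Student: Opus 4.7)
The plan is to apply \fullref{lem:hyperhasmeets} to the set $U := \{\tau' \in \hyper_n : \A \subset \Stab(\tau')\}$. Compatibility of $\A$ gives some $\tau_0 \in \hyper_n$ with $\A \subset \B(\tau_0)$, and since the one--two basis generates $\Stab(\tau_0)$ (\fullref{lem:onetwobasis}) we have $\A \subset \Stab(\tau_0)$, so $\tau_0 \in U$ and $U$ is non-empty. Because $\hyper_n$ is finite, $U$ is finite, and \fullref{lem:hyperhasmeets} supplies its greatest lower bound $\tau(\A) := \bigwedge U \in \hyper_n$. By definition of the meet, $\tau(\A) \le \tau'$ for every $\tau' \in U$, which is exactly the desired conclusion.

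To justify the name ``cone point''---that is, to verify that $\tau(\A)$ actually lies in $U$ and is hence the minimum of $U$ rather than merely a lower bound---I would add the following observation. For each $\balpha_{I_s j_s} \in \A$, let $\tau_s$ denote the two-edge hypertree of \fullref{fig:conepoint} with edges $I_s \cup \{j_s\}$ and $[n]\setminus I_s$. Using the combinatorial description of the stabilizer recorded just before \fullref{fig:conepoint}---that $\balpha_{Ij} \in \Stab(\tau')$ iff $I$ is a union of vertex sets of connected components of $\tau' \setminus \{j\}$---one checks the equivalence $\balpha_{Ij} \in \Stab(\tau') \Longleftrightarrow \tau_{Ij} \le \tau'$. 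Consequently each $\tau_s$ is a lower bound of $U$, so $\tau_s \le \tau(\A)$ for every $s$, giving $\A \subset \Stab(\tau(\A))$ and $\tau(\A) \in U$.

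The only real piece of work is the equivalence $\balpha_{Ij} \in \Stab(\tau') \Longleftrightarrow \tau_{Ij} \le \tau'$, and this is where I expect to spend most of the effort. One direction is immediate: if $\tau_{Ij} \le \tau'$ then every edge of $\tau'$ is contained in $I \cup \{j\}$ or in $[n]\setminus I$, which forces $I$ to be a union of components of $\tau' \setminus \{j\}$. For the converse, the key combinatorial point is that in a hypertree $\tau'$ an edge $e$ with $j \in e$ has $e \setminus \{j\}$ contained in a single connected component of $\tau' \setminus \{j\}$ (an acyclicity consequence); since $j \in [n]\setminus I$ this places $e$ into exactly one of the two edges of $\tau_{Ij}$. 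With this translation in hand, the lemma is a direct consequence of the meet-semilattice structure of $\hyper_n$.
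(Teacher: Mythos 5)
Your first paragraph is exactly the paper's proof: \fullref{lem:hyperhasmeets} makes $\hyper_n$ a meet semi-lattice, so the meet $\tau(\A) := \bigwedge U$ over the finite non-empty set $U = \{\tau' : \A \subset \Stab(\tau')\}$ exists and is a lower bound for $U$, which is the entire content of the lemma as stated.

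The rest of your argument goes beyond what \fullref{lem:conepoint} actually claims; verifying that $\tau(\A) \in U$ (equivalently, that $\B(\tau(\A)) \supseteq \A$) is the content the paper defers to \fullref{lem:cone-prop}, where it is dismissed as ``an easy induction.'' Your route to this is both correct and arguably cleaner: you isolate the equivalence $\balpha_{Ij} \in \Stab(\tau') \Leftrightarrow \tau_{Ij} \le \tau'$, which recasts the stabilizer condition as a poset condition, and then the conclusion falls out of the greatest-lower-bound property with no induction. Both directions of the equivalence check out against the removal-of-vertex-$j$ description of $\Stab(\tau')$ recorded in the paper just before \fullref{fig:conepoint}; the only cosmetic quibble is that the observation you flag as ``an acyclicity consequence''---that $e \setminus \{j\}$ lies in one component of $\tau' \setminus \{j\}$---is really just immediate from the definition of vertex removal (either $e \setminus \{j\}$ survives as a hyperedge, or it is a singleton), not from acyclicity.
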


In fact, even more is true.  An easy induction on the number of edges
in $\tau$ (or the size of the compatible collection $\A$,
respectively) can be used to generalize the lovely facts listed above.

\begin{lem}\label{lem:cone-prop}
For all hypertrees $\tau$, the cone point of the one--two basis of
$\tau$ is $\tau$ itself ($\tau(\B(\tau)) = \tau$) and for all
compatible collections $\A$, the one--two basis of the cone point of
$\A$ is $\A$ itself ($\B(\tau(\A)) = \A$).  As a consequence, the cone
point hypertree $\tau(\A)$ lies in the $\A$--core.
\end{lem}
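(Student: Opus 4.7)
Proof plan. I will prove both identities simultaneously by induction on $|\A|$, equivalently on $\rk(\tau)$. The base cases are immediate: for $|\A|=0$, the minimum hypertree (the single hyperedge $[n]$) has empty one--two basis, and for $|\A|=1$ the statement is exactly the ``lovely fact'' recorded in the paragraph preceding the lemma.

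For the inductive step with $\tau$ of rank $k \ge 2$, pick a leaf hyperedge $e$ of $\tau$---an edge sharing exactly one cut vertex $v$ with the rest of $\tau$ (such a leaf exists because $\tau$ has $k+1 \ge 3$ hyperedges). Set $B = e \setminus \{v\}$. Define $\tau_1$ to be the two-edge hypertree with hyperedges $e$ and $[n] \setminus B$ meeting at $v$, and let $\tau_2$ be the rank $k-1$ hypertree obtained from $\tau$ by merging $e$ with an adjacent hyperedge through $v$. A direct check of the edge containment relation shows $\tau = \tau_1 \vee \tau_2$ in the hypertree poset. The key combinatorial step is to verify $\B(\tau) = \B(\tau_1) \sqcup \B(\tau_2)$ via case analysis on the label $j$ of a basis element $\balpha_{Ij}$: when $j \in B \setminus \{v\}$, the vertex $j$ is non-separating in $\tau$ and contributes no element; when $j = v$, the branch $B$ yields the unique basis element of $\B(\tau_1)$ while the other branches match those of $\tau_2 - v$; and when $j \notin e$, the components of $\tau - j$ and $\tau_2 - j$ coincide. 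Combined with the identity $\tau(\A_1 \cup \A_2) = \tau(\A_1) \vee \tau(\A_2)$ for compatible unions (a formal consequence of the cone-point characterization in \fullref{lem:conepoint}), the inductive hypothesis delivers
\[
\tau(\B(\tau)) \;=\; \tau(\B(\tau_1)) \vee \tau(\B(\tau_2)) \;=\; \tau_1 \vee \tau_2 \;=\; \tau,
\]
establishing the first identity.

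The second identity $\B(\tau(\A)) = \A$ follows by the symmetric argument: peel off a single $\balpha \in \A$ so its two-edge cone point plays the role of $\tau_1$, set $\A' = \A \setminus \{\balpha\}$, and invoke the basis-split together with the inductive hypothesis to conclude $\B(\tau(\A)) = \{\balpha\} \sqcup \B(\tau(\A')) = \{\balpha\} \sqcup \A' = \A$. The ``as a consequence'' clause is then immediate: $\B(\tau(\A)) = \A$ exhibits $\A \subset \B(\tau(\A))$, i.e., $\tau(\A)$ supports $\A$, hence lies in the $\A$--core. The main obstacle is verifying the basis-split $\B(\tau) = \B(\tau_1) \sqcup \B(\tau_2)$ when the leaf edge contains the label $1$ or $2$: one must check the ``avoid $\hat{j}$'' side condition on the basis element contributed at the cut vertex $v$, which requires a small case analysis depending on which side of $v$ contains the label $1$ (and label $2$, when $v = 1$), but the rule $\hat{j} \in \{1,2\}$ always singles out exactly one valid side.
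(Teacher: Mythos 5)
Your overall strategy---simultaneous induction on $\rk(\tau)$ and $|\A|$---is exactly the ``easy induction'' the paper alludes to without carrying out, so the plan is reasonable. However, the central combinatorial claim, the basis split $\B(\tau) = \B(\tau_1) \sqcup \B(\tau_2)$, is false as stated, and the flaw is not the one you flag at the end.

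The problem occurs whenever the cut vertex $v$ has degree $\ge 3$ in $\tau$. Write the connected components of $\tau - v$ as $B, F, C_1, \dots, C_m$, where $F$ is the component through the absorbed edge $f$. Then the components of $\tau_2 - v$ are $B \cup F, C_1, \dots, C_m$. Your assertion that ``the other branches match those of $\tau_2 - v$'' cannot hold: $\tau_2 - v$ contains the merged component $B \cup F$, which is not a component of $\tau - v$, so $\B(\tau_2)$ can contain $\balpha_{B \cup F, v}$, an element that is \emph{not} in $\B(\tau)$ (it lies in $\Stab(\tau)$ but $B \cup F$ is not a single component of $\tau - v$). Concretely, take $\tau \in \hyper_5$ to be the star with edges $\{1,5\},\{2,5\},\{3,5\},\{4,5\}$, so $\B(\tau) = \{\balpha_{25}, \balpha_{35}, \balpha_{45}\}$. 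Choosing $e = \{4,5\}$, $v=5$, $B = \{4\}$, and absorbing $f = \{3,5\}$ gives $\tau_2$ with edges $\{1,5\},\{2,5\},\{3,4,5\}$ and $\B(\tau_2) = \{\balpha_{25}, \balpha_{\{3,4\},5}\}$, while $\B(\tau_1) = \{\balpha_{45}\}$. The union is $\{\balpha_{25}, \balpha_{\{3,4\},5}, \balpha_{45}\} \ne \B(\tau)$. Your closing remark addresses only the $\hat{j}$-avoidance condition on the single $\B(\tau_1)$ element, which is a different and milder subtlety; it does not touch the merged-component defect in $\B(\tau_2)$.

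The decomposition can be repaired, but it requires making two choices that the write-up leaves free. First, the leaf edge $e$ must be chosen so that $\hat v \notin B$; second, the absorbed adjacent edge $f$ must be the edge of $v$ whose component of $\tau - v$ contains $\hat v$. With those choices the excluded component on the $v$-side is $F$ for $\tau$ and $B \cup F$ for $\tau_2$, so the $j=v$ contributions to $\B(\tau)$ and $\B(\tau_1) \sqcup \B(\tau_2)$ agree, and the split goes through. (One then has to check such a leaf edge always exists when $\rk(\tau) \ge 2$: if the vertex $\hat v$ that would cause trouble is a leaf vertex it lies in at most one leaf hyperedge and one picks a different one; if it is a cut vertex it never lies in any $B$.) In the example above this means absorbing $f = \{1,5\}$ rather than $\{3,5\}$. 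Finally, a minor point: the identity $\tau(\A_1 \cup \A_2) = \tau(\A_1) \vee \tau(\A_2)$ is not a purely formal consequence of \fullref{lem:conepoint}; only the inequality $\tau(\A_1 \cup \A_2) \ge \tau(\A_1) \vee \tau(\A_2)$ is formal (a meet over a smaller set), and that one inequality is all your argument actually needs, combined with the clear bound $\tau(\B(\tau)) \le \tau$. You should state it that way rather than invoking the full equality.
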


The combinatorics of the spectral sequence break into two cases.

\begin{defn}
A compatible collection of one--two outer-automorphisms $\A$ is
\emph{essential} if $\tau(\A)$ satisfies
\begin{enumerate}
\item There is at most one fat edge (an edge of order $>2$);
\item If there is a fat edge then the vertex labelled $1$ is in the fat edge;
\item If there is a fat edge then the unique reduced path from the
vertex labelled $1$ to the vertex labelled $2$ contains the fat edge.
\end{enumerate}
Notice that $\A$ is essential when $\tau(\A)$ is an ordinary tree.
Compatible collections of one--two outer-automorphisms that are not
essential are \emph{inessential}.
\end{defn}

For example, $\A= \{\balpha_{54}, \balpha_{\{6, 7\}, 4},
\balpha_{76}\}$ is an essential basis as it is the one--two basis for
the stabilizer of the rank three hypertree shown in
\fullref{fig:nongood}.

\begin{figure}[ht!]
\begin{center}
  \includegraphics[width=4.2cm]{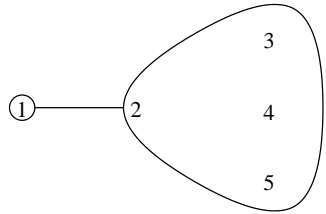}
 \caption{The hypertree $\tau(\{\balpha_{\{3,4,5\}, 2}\})$}
 \label{fig:examplecone}
\end{center}
\end{figure}
To build some intuition for what can occur with inessential one--two
bases, consider the one--two outer-automorphism $\balpha_{\{3,4,5\}, 2}
\in \opsig_5$. The cone point $\tau(\{\balpha_{\{3,4,5\}, 2}\})$ is
shown in \fullref{fig:examplecone}.
While there is a single fat edge, the fat edge does not contain the
vertex labelled by $1$, nor is it in the minimal path from the vertex
labelled $1$ to the vertex labelled $2$.  The associated
$\{\balpha_{\{3,4,5\}, 2}\}$--peripheral subcomplex of $|\hyper_5|$ is
shown in \fullref{fig:peripheral}.  The outer-automorphism
$\balpha_{\{3,4,5\},2}$ is in the stabilizer of every hypertree in
this subcomplex, but it is not in the one--two basis for any of these
hypertrees.

\begin{figure}[ht!]
\begin{center}
  \includegraphics[width=12.5cm]{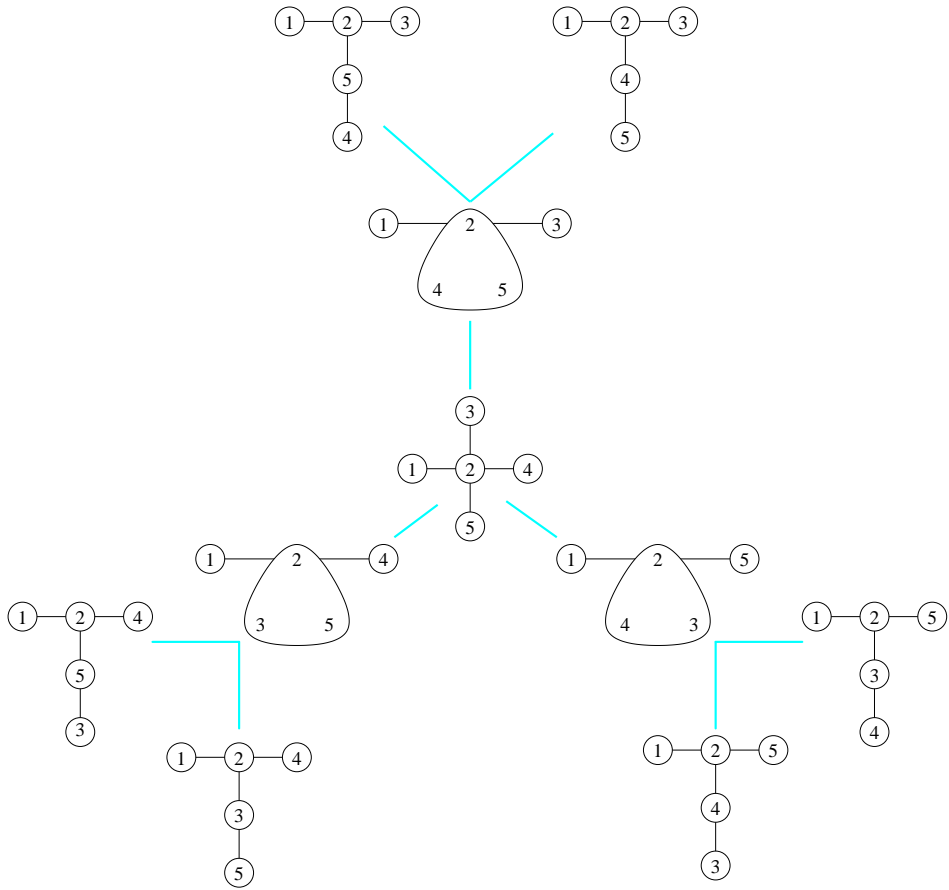}
 \caption{The peripheral subcomplex of $|\hyper_5|$ associated to the
one--two outer-automorphism $\balpha_{\{3,4,5\}, 2}$}
 \label{fig:peripheral}
\end{center}
\end{figure}

\begin{lem}\label{lem:essentialmeansnoperiph}
Let $\A$ be a compatible collection of one--two automorphisms and let
$\tau(\A)$ be the cone point of $\A$.  Then $\A$ is essential if and
only if the $\A$--peripheral complex is empty.
\end{lem}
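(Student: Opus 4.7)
The plan is to convert the three geometric conditions that define essentiality into a single combinatorial condition about paths in $\tau(\A)$, and then to check, basis-element by basis-element, which $\balpha_{Ij}\in\A$ can fail to remain in the one--two basis when $\tau(\A)$ is refined.

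First, for any hypertree $\tau$, vertex $j$, and edge $e$ containing $j$, let $C_e^j$ denote the vertex set of the component of $\tau\setminus\{j\}$ containing $e\setminus\{j\}$.  I claim that $\A$ is essential if and only if $\hat j\in C_e^j$ for every fat edge $e$ of $\tau(\A)$ and every $j\in e$; equivalently, the first edge of the reduced $j\to\hat j$ path is $e$.  The forward direction is a direct case check: when the unique fat edge $e$ contains $1$ and lies on the $1$--$2$ path, the path $1\to 2$ must leave $1$ through $e$, so $2\in C_e^1$, and for $j\in e$ with $j\ne 1$ the edge $e$ already contains both $j$ and $\hat j=1$.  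For the reverse, if some fat edge $e$ avoids $1$ then at the vertex $j\in e$ closest to $1$ the path $j\to 1$ must exit $e$ through a different edge, contradicting the condition; and if $1$ belonged to two fat edges $e_1,e_2$, the path $1\to 2$ could start with only one of them.

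For essential $\Rightarrow$ peripheral complex empty, suppose $\tau'\ge\tau(\A)$ with $\A\subset\Stab(\tau')$.  Then $\tau'$ is obtained from $\tau(\A)$ by replacing each fat edge $e$ with a hypertree $\tilde e$ on the vertex set $e$ while leaving the remaining edges fixed, and by essentiality there is at most one such $e$.  Fix $\balpha_{Ij}\in\A$; since $I$ is a single component of $\tau(\A)\setminus\{j\}$, it suffices to show $I$ remains a single component of $\tau'\setminus\{j\}$.  If $j\notin e$, refining $e$ only rearranges the internal structure of one component of $\tau(\A)\setminus\{j\}$ and so the components of $\tau'\setminus\{j\}$ agree as vertex sets with those of $\tau(\A)\setminus\{j\}$.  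If $j\in e$, the only component at $j$ affected by the refinement is $C_e^j$; but by the reformulation $\hat j\in C_e^j$, which would force $\balpha_{C_e^j,j}\notin\A$, so $I$ must be one of the untouched components $C_f^j$ for $f\ne e$.

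For the converse, suppose $\A$ is inessential.  The reformulation produces a fat edge $e$ of $\tau(\A)$ and a vertex $j\in e$ with $\hat j\notin C_e^j$, whence $\balpha_{C_e^j,j}\in\B(\tau(\A))=\A$.  Since $|e|\ge 3$, partition $e\setminus\{j\}$ into two nonempty pieces $V_1,V_2$ and replace $e$ in $\tau(\A)$ by the pair of edges $\{j\}\cup V_1$ and $\{j\}\cup V_2$, producing $\tau'>\tau(\A)$ in which $j$ has valence two in the refinement.  For every $\balpha_{Kl}\in\A$ the set $K$ is a single component of $\tau(\A)\setminus\{l\}$, hence a union of components of $\tau'\setminus\{l\}$, so $\A\subset\Stab(\tau')$.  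But $C_e^j$ now splits into two components of $\tau'\setminus\{j\}$, so $\balpha_{C_e^j,j}\notin\B(\tau')$ and $\tau'$ is $\A$-peripheral.

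The main obstacle I anticipate is the reformulation step itself.  The three bulleted conditions defining essentiality look somewhat ad hoc, and the crux of the argument is recognizing that together they enforce the single assertion that at every vertex of a fat edge, the path toward the forbidden vertex $\hat j$ exits through that very fat edge.  Once this dictionary is in place, both directions reduce to routine local bookkeeping on how components of $\tau(\A)\setminus\{j\}$ split under the refinement.
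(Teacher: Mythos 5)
Your proof is correct and follows the same strategy as the paper's: reformulate essentiality, show that refinements of $\tau(\A)$ preserve membership in the one--two basis in the essential case, and split a fat edge to produce a peripheral hypertree in the inessential case. The paper phrases its (implicit) reformulation as saying that for each $\balpha_{Ij}\in\A$ the sub-hypertree induced by $I\cup\{j\}$ is an ordinary tree, while you phrase it as saying that $\hat{j}\in C_e^j$ for every fat edge $e$ and every $j\in e$; these are equivalent (one says no fat edge lies entirely inside a support set $I\cup\{j\}$, the other that at every vertex of a fat edge the forbidden index lies on the $e$-side of that vertex), and your version makes the component bookkeeping in the forward direction somewhat more explicit than the paper's terse ``the sub-hypertree is the same ordinary tree, hence $\alpha_{Ij}\in\B(\tau)$.'' One small omission in the reverse direction of your reformulation: you explicitly handle a fat edge avoiding $1$ and the case of two fat edges through $1$, but not the case of a single fat edge $e$ through $1$ that is not the first edge on the reduced $1\to 2$ path (failure of the third condition alone). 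That case is disposed of identically to your two-edge case, since the $1\to 2$ path must then leave $1$ through an edge other than $e$, giving $2\notin C_e^1$; worth stating, but not a structural problem.
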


\begin{proof}
Let $\A$ be essential and let $\alpha_{Ij} \in \A$.  The sub-hypertree
induced by $I \cup \{j\}$ is an ordinary tree, with the vertex
corresponding to $j$ being a leaf.  Since every hypertree $\tau$ with
$\tau(\A) < \tau$ is formed by dividing the single fat edge of
$\tau(\A)$ into a higher rank hypertree, the sub-hypertree of $\tau$
induced by $I \cup \{j\}$ is the same ordinary tree.  Hence
$\alpha_{Ij} \in \B(\tau)$, so $\A \subset \B(\tau)$.  Thus by
definition the $\A$--peripheral complex is empty.

If $\A$ is inessential, there must be a one--two automorphism
$\alpha_{Ij}$ where the sub-hypertree induced by $I \cup \{j\}$
contains a fat edge, $e$.  Let $k$ label the vertex of $e$ that is
closest to $j$.  Then splitting $e$ into two hyperedges that are
joined along $k$ creates a hypertree $\tau$ that is above $\tau(\A)$
in $\hyper_n$ but $\alpha_{Ij} \not \in \B(\tau)$.  Thus $\tau$ is
peripheral, and the $\A$--peripheral complex is not empty.
\end{proof}

\section{Computing the $E_2$ page}\label{sec:Etwo}

In this section we establish:

\begin{prop}\label{prop:collapse}
The non-zero entries on the $E_2$ page for the spectral sequence
corresponding to the action $\opsig_n \curvearrowright \MM_n$ are
concentrated in the $(0,q)$--column.
\end{prop}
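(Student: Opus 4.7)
The plan is to decompose each row $E_1^{*,q}$ of the equivariant spectral sequence as a direct sum of chain subcomplexes $C^*(\A)$ indexed by compatible collections $\A$ of one--two outer-automorphisms of size $q$, and then to argue that each summand has cohomology concentrated at $p=0$.

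First, I would exploit the one--two basis. By \fullref{lem:onetwobasis}, if $\sigma$ is a simplex of $|\hyper_n|$ with minimum vertex $\tau_0$ then $\Stab(\sigma) = \Stab(\tau_0)$ is free abelian with basis $\B(\tau_0)$, so $H^q(\Stab(\sigma);\Z) \cong \Lambda^q(\Z^{\rk(\tau_0)})$ has a canonical $\Z$-basis given by wedges $\alpha_\A = \alpha_1 \wedge \cdots \wedge \alpha_q$ as $\A$ ranges over compatible subcollections of $\B(\sigma)$ of size $q$. This produces a $\Z$-basis of $E_1^{*,q}$ indexed by pairs $(\sigma, \A)$ with $\A \subset \B(\sigma)$, and I let $C^*(\A)$ denote the span of those pairs with fixed second coordinate, so that $E_1^{*,q} = \bigoplus_\A C^*(\A)$ as graded $\Z$-modules. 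The first substantive check is that each $C^*(\A)$ is a subcomplex; this is a bookkeeping verification relying on \fullref{lem:cone-prop} to control the behavior of $\alpha_\A$ under the restriction $\Stab(\tau_0') \hookrightarrow \Stab(\tau_0)$ induced by inserting a new minimum $\tau_0' < \tau_0$ into a chain.

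Next, I would geometrically identify $C^*(\A)$ with a simplicial cochain complex of a subcomplex of $|\hyper_n|$ and split the analysis into two cases according to whether $\A$ is essential or inessential. When $\A$ is essential, \fullref{lem:essentialmeansnoperiph} guarantees that the $\A$--peripheral complex is empty, and $C^*(\A)$ computes the cohomology of the $\A$--core complex; by \fullref{lem:conepoint} and \fullref{lem:cone-prop}, this complex is a cone with apex $\tau(\A)$, so its reduced cohomology vanishes and $C^*(\A)$ contributes only a single copy of $\Z$ at position $(0,q)$. When $\A$ is inessential the restriction differentials interact nontrivially with the $\A$--peripheral complex, and the goal is to show that $C^*(\A)$ is fully acyclic so that it contributes nothing to $E_2$.

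The inessential case is the main obstacle, and it is where I expect most of the work to be concentrated. My plan is to construct a chain contraction---or equivalently an acyclic matching---that pairs each $\A$--core simplex with a uniquely determined $\A$--peripheral simplex obtained by refining the offending fat edge of $\tau(\A)$ so as to sever the reduced path from the vertex labelled $1$ to the vertex labelled $2$, exploiting the structural characterization of inessentiality given in the definition preceding \fullref{lem:essentialmeansnoperiph}. Verifying that such a pairing is well defined globally on $C^*(\A)$ and respects the restriction maps---which on peripheral hypertrees rewrite $\alpha_\A$ in a non-one--two basis---is the substantive content of the argument. Once both cases are handled, summing over all $\A$ establishes the proposition.
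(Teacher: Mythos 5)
Your overall decomposition of $E_1^{*,q}$ into subcomplexes $C^*(\A)$ indexed by compatible collections $\A$, and your treatment of the essential case via the cone point $\tau(\A)$, both match the paper's proof (Lemmas~\ref{lem:essential} and its preamble). The gap is in the inessential case, and it is a real one---this is precisely where the hard work is.

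Your plan to ``pair each $\A$--core simplex with a uniquely determined $\A$--peripheral simplex obtained by refining the offending fat edge'' does not line up with the actual structure of $C^*(\A)$. The summands of $C^*(\A)$ are indexed by chains $\tau_0 < \cdots < \tau_p$ with $\A \subset \B(\tau_0)$, so the minimum element $\tau_0$ must be an $\A$--core hypertree while the higher $\tau_i$ may be $\A$--peripheral; a chain is not cleanly ``a core simplex'' or ``a peripheral simplex,'' and a single refinement rule aimed at one ``offending'' fat edge is ambiguous as soon as $\tau(\A)$ has more than one worrisome edge (which can certainly happen). Moreover, even if a rule were fixed, you would need to verify that it is acyclic as a matching, and nothing in your sketch controls the restriction maps that arise when a non--one--two expression of $\A$ in $\B(\tau_i)$ is rewritten under $\Stab(\tau_0) \hookrightarrow \Stab(\tau_i)$.

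The paper's argument splits this into two genuinely separate steps you should adopt. First, there is a much simpler pairing: match each chain whose minimum is not $\tau(\A)$ with the chain obtained by inserting $\tau(\A)$ as a new minimum (legal since $\tau(\A) \le \tau_0$ whenever $\A \subset \B(\tau_0)$, by Lemma~\ref{lem:conepoint}). The unmatched chains are exactly those of the form $\tau(\A) < \tau_1 < \cdots < \tau_k$ with every $\tau_i$ peripheral, and the resulting reduced cochain complex is the augmented simplicial cochain complex of the $\A$--peripheral complex, shifted by one. Second---and this is the substantive lemma your plan omits entirely---one must prove that the $\A$--peripheral complex is contractible (Lemma~\ref{lem:contractible}). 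This is not a matching argument: the paper covers the peripheral complex by the cones $|\hyper_{\ge \mathfrak{p}}|$ over the hypertrees $\tau(\A)_{\mathfrak{p}}$ obtained by nontrivially partitioning the worrisome edges, checks that all finite intersections of these cones are again cones (using meets in the partition lattices), and invokes the nerve lemma, with the total refinement $\hat{\mathfrak{p}}$ coning off the nerve. Without this contractibility statement, acyclicity of $C^*(\A)$ in the inessential case is not established, so your proof of the proposition is incomplete.
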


To start the process of proving this, we note that the one--two bases
for the stabilizers of hypertrees allow us to give a concrete
description of the entries on the first page of the spectral sequence.
The $(p,q)$ entry on the $E_1$ page is the product
\[
\prod_{\begin{array}{c}{\sigma \in {|\hyper_n|},}\\ 
{\mbox{dim}(\sigma) = p}\end{array}} H^q (\Stab(\sigma), \Z)\ .
\]   
Since the stabilizer of $\sigma$ is the stabilizer of its minimal
element, we may consider what happens in the case of a hypertree
$\tau$.  Each stabilizer is free abelian, so the first homology
$H_1(\mbox{Stab}(\tau), \Z)$ is free abelian with generating set
$\{[\balpha_{Ij}]~|~\balpha_{Ij} \in \B(\tau)\}$.  The cohomology
group $H^1(\mbox{Stab}(\tau), \Z)$ is then generated by the dual basis
$\{\balpha^*_{Ij}\}$ where
\[
\balpha_{Ij}^*([\balpha_{Kl}]) = \left\{\begin{array}{ll}
1 & I = K\mbox{ and } j = l\\ 
0 & \mbox{otherwise}.\\ \end{array}
\right.
\]
Because the $H^*(\Z^n)$ is an exterior algebra generated by
one-dimensional classes, the set $\{\balpha_{Ij}^*~|~\balpha_{Ij} \in
\B(\tau)\}$ is a generating set for the cohomology of the stabilizer.
It follows that the set of all products of $q$ distinct
$\balpha_{Ij}$s is a generating set for $H^q(\Stab(\tau),\Z)$.  Thus
we may identify the generating set for $H^q(\Stab(\sigma),\Z)$ with
the collection of all subsets of one--two outer-automorphisms $\A
\subset \B(\tau)$ with $|\A| = q$.

As the differentials are a combination of restrictions to direct
summands and coboundary maps, the $E_1$ page of the equivariant
spectral sequence can be expressed as a union of sub-cochain
complexes.  Let $\A$ be a compatible collection of one--two
automorphisms, with $|\A|=q$.  Then in row $q$ one sees the terms
where $\A$ describes a generator, corresponding to simplices $\sigma$
with $\B(\sigma) \supset \A$.

\begin{lem}\label{lem:essential}
If $\A$ is essential, then the collection of all entries in row $q$ of
the first page that are given by having $\A \subset \B(\tau)$ forms a
sub-cochain complex whose cohomology consists of a single $\Z$ in
dimension zero.
\end{lem}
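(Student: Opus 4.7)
The plan is to identify the given sub-cochain complex with the simplicial cochain complex of the $\A$-core complex, and then to use essentialness to show that this complex is contractible.

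First I would enumerate the $p$-simplices contributing to the complex. A simplex $\sigma = (\tau_0 < \cdots < \tau_p)$ contributes exactly when $\A \subset \B(\sigma) = \B(\tau_0)$, i.e., when $\tau_0$ lies in the $\A$-core. Under the essential hypothesis, \fullref{lem:essentialmeansnoperiph} says no hypertree is $\A$-peripheral, so the conditions $\A \subset \Stab(\tau)$ and $\A \subset \B(\tau)$ agree for every $\tau$. Combined with \fullref{lem:conepoint}, this means the $\A$-core equals the upper set $U(\A) = \{\tau \in \hyper_n : \tau(\A) \leq \tau\}$, and the sub-cochain complex has one $\Z$-summand (spanned by $\balpha_\A^*$) at each simplex of the order complex $|U(\A)|$.

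Next I would verify that the $E_1$-differential preserves this sub-summand and there agrees, up to signs, with the simplicial coboundary of $|U(\A)|$. Faces obtained by inserting a vertex above $\tau_0$ do not change $\Stab(\sigma)$, so the restriction map acts as the identity on $\balpha_\A^*$ and yields the standard alternating-sign coboundary contribution. Faces obtained by prepending a smaller $\tau_{-1}$ split into two cases: if $\tau_{-1} \in U(\A)$ then $\A \subset \B(\tau_{-1})$ and $\balpha_\A^*$ pulls back to the corresponding dual basis class; if $\tau_{-1} \notin U(\A)$ then essentialness forces $\A \not\subset \Stab(\tau_{-1})$, and a direct computation using the expansion of $\B(\tau_{-1})$ in terms of $\B(\tau_0)$ shows that the restriction of $\balpha_\A^*$ vanishes---either because some individual restriction $\balpha_{I_k j_k}^*|_{\Stab(\tau_{-1})}$ is already zero, or because two such restrictions collapse to a common class and the wedge product vanishes by skew-symmetry.

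Finally, since $U(\A)$ has a minimum element, namely the cone point $\tau(\A)$ (see \fullref{lem:cone-prop}), the order complex $|U(\A)|$ is a cone with apex the vertex $\tau(\A)$ and hence contractible. Therefore its simplicial cohomology with $\Z$-coefficients is concentrated in dimension zero, where it equals $\Z$, which is the desired statement. The main obstacle is the second step: the combinatorial verification that when $\A \not\subset \Stab(\tau_{-1})$ the restricted class $\balpha_\A^*$ really does vanish in $H^q(\Stab(\tau_{-1}))$. This rests on a careful description of how the components of $\tau_0 \setminus j$ bundle into the components of $\tau_{-1} \setminus j$ under coarsening, together with the exterior-algebra structure on the cohomology of the free abelian stabilizers.
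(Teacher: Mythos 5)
Your plan follows the same outline as the paper's proof: identify the $\A$-labeled summands with the simplicial cochain complex of the support of $\A$, note that essentialness (via \fullref{lem:essentialmeansnoperiph} and \fullref{lem:conepoint}) forces this support to be the upper set of $\tau(\A)$, and conclude contractibility from the cone point $\tau(\A)$. The point you correctly flag as the main obstacle---that the $E_1$-differential out of the $\A$-summands must not escape them---is exactly the step the paper's very terse proof does not spell out. However, the resolution you propose for it is false: the restriction of an individual $\balpha_{Ij}^*$ to $\Stab(\tau_{-1})$ need not vanish when $\balpha_{Ij}\notin\Stab(\tau_{-1})$.

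Concretely, take $n=6$, let $\tau_0$ be the ordinary tree with edges $\{1,2\},\{1,3\},\{3,4\},\{3,5\},\{5,6\}$, and let $\tau_{-1}<\tau_0$ be the hypertree with edges $\{1,2\},\{1,3\},\{3,4,5\},\{5,6\}$. Then $\B(\tau_0)=\{\balpha_{\{3,4,5,6\},1},\balpha_{\{4\},3},\balpha_{\{5,6\},3},\balpha_{\{6\},5}\}$ and $\B(\tau_{-1})=\{\balpha_{\{3,4,5,6\},1},\balpha_{\{4,5,6\},3},\balpha_{\{6\},5}\}$. The singleton $\A=\{\balpha_{\{4\},3}\}$ is essential, $\A\subset\B(\tau_0)$, and $\A\not\subset\Stab(\tau_{-1})$. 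Writing $\balpha_{\{4,5,6\},3}=\balpha_{\{4\},3}+\balpha_{\{5,6\},3}$ inside $\Stab(\tau_0)$, one checks that the restriction of $\balpha_{\{4\},3}^*$ to $\Stab(\tau_{-1})$ is $\balpha_{\{4,5,6\},3}^*$, which is \emph{nonzero}. Since $|\A|=1$ there are no wedge products to collapse, so neither branch of your alternative ("some individual restriction is already zero, or two collapse to a common class") occurs. The differential genuinely escapes the $\A$-summands, into the summand labeled by the (inessential) collection $\{\balpha_{\{4,5,6\},3}\}$.

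What is true is that these escape terms only ever land in summands labeled by collections of strictly larger total weight $\sum_{\balpha_{Ij}\in\A}|I|$, because restriction replaces each component $I$ of $\tau_0\setminus j$ either by $0$ or by the (larger) component $K'$ of $\tau_{-1}\setminus j$ that contains it. So the $\A$-indexed pieces form the associated graded of a weight filtration rather than an honest direct-sum decomposition, and it is the associated-graded differential that coincides with the simplicial coboundary of the support. With that extra step (and a collapse argument using the freeness of the groups involved), the lemma's conclusion follows; without it, the argument---yours and, arguably, the paper's as literally written---has a gap.
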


\begin{proof}
Since $\A$ is essential, its support is a subcomplex of the hypertree
poset.  That is, if $\sigma \in |\hyper_n|$ is in the support of $\A$,
then every face of $\sigma$ is in the support of $\A$.  Further, the
vertex associated to $\tau(\A)$ is a cone point for the support of
$\A$, hence the support of $\A$ is contractible.  The fact that the
support is actually a subcomplex shows that the sub-cochain complex in
the $E_1$ page that corresponds to $\A$ is just the cochain complex
for the support of $\A$.  Because the support is contractible, the
cohomology of this cochain complex is trivial except for a single $\Z$
in dimension zero.
\end{proof}

We now turn to the case where $\A$ is inessential.  Call a fat edge in
$\tau(\A)$ \emph{worrisome} if it does not contain the vertex labelled
$1$ or if it does contain the vertex labelled $1$, but the reduced
path joining $1$ to $2$ does not contain this edge.  Let $\{e_1,
\ldots , e_k\}$ be the worrisome fat edges of $\tau(\A)$.  Let $c_i$
be the label of the vertex in $e_i$ that is closest to the vertex
labelled $1$ and let $L_i$ be the remaining labels of vertices in
$e_i$.  Thus, viewing $\tau(\A)$ as a hypertree rooted at $1$, the
vertices labelled by numbers in $L_i$ are one level down from the
vertex $c_i$.

If ${\mathcal P}_i$ is a partition of $L_i$ define
$\tau(\A)_{{\mathcal P}_i}$ to be the hypertree where the hyperedge
$e_i$ has been split into hyperedges defined by this partition.  That
is, if ${\mathcal P_i} = \{\ell_1, \ldots , \ell_m\}$ then the edge
$e_i$ will be replaced with the edges $\{\ell_j \cup \{c_i\}~|~1 \le j
\le m\}$.  Moreover, if ${\mathfrak p} \in \Pi_1 \times \cdots \times
\Pi_k$ is a collection of partitions of the $L_i$, define
$\tau(\A)_{{\mathfrak p}}$ to be the hypertree formed by splitting
each worrisome edge $e_i$ by the associated partition of $L_i$.

\begin{exmp}
Consider the hypertree $\tau$ shown in \fullref{fig:gravity}.  The
worrisome edges are all of the edges of $\tau$, excepting $e = \{1, 2,
3\}$.  Set $L_1 = \{4,5\}, L_2 = \{6, 7\}, L_3 = \{8,9\}$ and $L_4 =
\{10, 11\}$ in order to fix the indexing.  Let
\[
\mathfrak{p} = \left(
\{\{4\},\{5\}\}, \{\{6, 7\}\},  \{\{8, 9\}\}, \{\{10\},\{11\}\}
\right)
\]
be the result of taking non-trivial partitions of $L_1$ and $L_4$.
Then the tree $\tau_{{\mathfrak p}}$ formed by splitting $\tau$
according to this product of partitions is shown in
\fullref{fig:gravitysplit}.
\end{exmp}

\begin{figure}[ht!]
\begin{center}
\includegraphics[width=12cm]{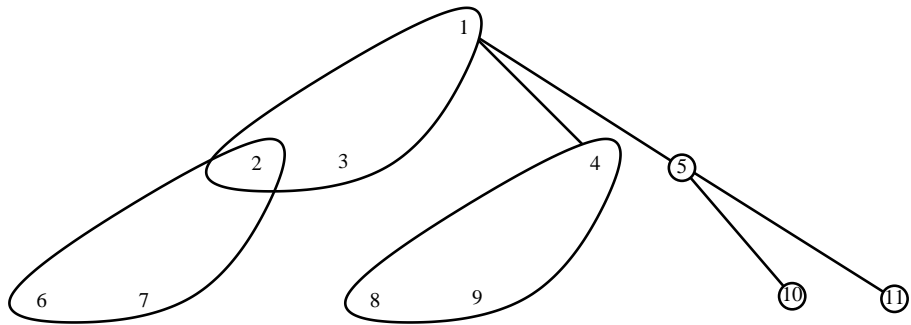}
\caption{The hypertree resulting from splitting some of the worrisome
fat edges in the hypertree shown in \fullref{fig:gravity}}
\label{fig:gravitysplit}
\end{center}
\end{figure}

\begin{lem}\label{lem:contractible}
Let $\A$ be an inessential collection of one--two automorphisms.  Then
the $\A$--peripheral complex is contractible.
\end{lem}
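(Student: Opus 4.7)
The plan is to describe the $\A$-peripheral complex combinatorially using a product decomposition of the refinement subposet $R = \{\tau \in \hyper_n : \tau \geq \tau(\A)\}$, and then to contract it via the Nerve Lemma.

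First, I would observe that because distinct edges of a hypertree meet in at most one vertex, refinements of distinct hyperedges of $\tau(\A)$ can be chosen independently, yielding a poset isomorphism
\[
R \;\cong\; \prod_{e \in E(\tau(\A))} \hyper_e,
\]
where $\hyper_e$ denotes the hypertree poset on the vertex set of $e$, with minimum element $e$ itself. The hypertrees $\tau(\A)_{\mathfrak p}$ introduced just before the statement are examples of elements of this product. Next, I would characterize which $\tau \in R$ are $\A$-peripheral by tracking how the components of $\tau \setminus j$ evolve under refinement: for each worrisome edge $e_i$ of $\tau(\A)$, the basis element $\alpha_{I_{c_i}, c_i} \in \A = \B(\tau(\A))$ associated to $e_i$ is lost from $\B(\tau)$ exactly when the refinement of $e_i$ has $c_i$ of degree at least two (equivalently, $c_i$ is no longer a leaf), and no refinement at a non-worrisome edge, nor one at a worrisome edge that keeps $c_i$ a leaf, can cause any element of $\A$ to leave $\B$. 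Writing $B_i \subset \hyper_{e_i}$ for refinements with $c_i$ of degree at least two and $U_i = \{\tau \in R : \tau_{e_i} \in B_i\}$, the $\A$-peripheral complex is then the order complex of $\bigcup_i U_i$. Verifying this characterization---carefully tracing the local effect of refining each hyperedge on one--two bases---is the chief combinatorial obstacle I anticipate.

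Each $U_i$ is upward-closed in $R$, because refining $\tau_{e_i}$ can only raise the degree of $c_i$; consequently the geometric realization of intersections is well-behaved, and $U_S := \bigcap_{i \in S} U_i$ is the product $\prod_{j \notin S} \hyper_{e_j} \times \prod_{i \in S} B_i$. Each factor $\hyper_{e_j}$ has a minimum and thus contributes a contractible factor. For $B_i$ I would define the monotone retraction $g : B_i \to B_i$ sending $t$ to the star-shaped hypertree at $c_i$ whose edges $\{c_i\} \cup P_r$ are indexed by the connected components of $t \setminus c_i$; since $g(t) \leq t$ and $g^2 = g$, the standard closure-operator argument yields $|B_i| \simeq |g(B_i)|$, and $g(B_i)$ is naturally identified with the partition lattice of $L_i$ with its coarsest element removed---a poset whose unique maximum is the discrete partition (well-defined since $|L_i| \geq 2$), hence with contractible order complex.

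Thus each $|U_S|$ is a product of contractibles. Since $U_S$ is non-empty for every non-empty $S$ (take the $c_i$-star for $i \in S$ and leave all other edges unrefined), the nerve of the cover $\{|U_i|\}$ is the full simplex on the set of worrisome edges, and the Nerve Lemma concludes that the $\A$-peripheral complex is homotopy equivalent to this contractible nerve.
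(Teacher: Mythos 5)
Your proof is correct and reaches the conclusion by the same high-level strategy as the paper---cover the peripheral complex by subposets of $\hyper_{\ge\tau(\A)}$ and apply the Nerve Lemma (the paper invokes the Quillen Fiber Lemma for this step)---but the choice of cover is genuinely different. The paper takes one cover element $\hyper_{\ge\mathfrak p}$ for each non-trivial $\mathfrak p \in \Pi_1 \times \cdots \times \Pi_k$; then every cover element and every finite intersection of cover elements has a cone point, namely the minimum $\tau(\A)_{\mathfrak p}$, so contractibility of each piece is immediate, and the nerve is the full simplex on the non-trivial $\mathfrak p$'s with $\hat{\mathfrak p}$ supplying a cone point. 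You instead take one cover element $U_i$ per worrisome edge; each $U_i$ is precisely the union of the paper's $\hyper_{\ge\mathfrak p}$ over those $\mathfrak p$ whose $i$-th coordinate is non-trivial, so your cover is a coarsening of theirs, and the payoff of a small, finite nerve (the full simplex on the $k$ worrisome edges) comes at the cost of needing more structure to contract the pieces: the product decomposition $R \cong \prod_e \hyper_e$, the identification of $U_S$ as a product of whole $\hyper_e$'s (each with a minimum) and the subposets $B_i$, and the descending closure-operator retraction of $B_i$ onto the partition lattice of $L_i$ with the one-block partition deleted. The characterization you flagged as the chief obstacle---that $\tau \ge \tau(\A)$ is peripheral if and only if $c_i$ has degree at least two in the local refinement of some worrisome $e_i$---does hold (the basis element $\balpha_{Kj} \in \A$ attached to an edge $e$ of $\tau(\A)$, with $j$ the vertex of $e$ nearest $1$ and $K$ the set of vertices strictly below $e$, survives into $\B(\tau)$ exactly when $j$ remains a leaf of the refinement of $e$, and only worrisome edges can fail this), and it is also implicitly what makes the paper's family a cover of the peripheral complex. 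In short: the paper's proof is shorter because cone points at the $\tau(\A)_{\mathfrak p}$ do all of the contractibility work directly, while yours makes the product structure of the refinement poset explicit and produces a small nerve.
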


\begin{proof}
Let $\mathfrak p$ be a non-trivial element of $\Pi_1 \times \cdots
\times \Pi_k$.  That is, assume at least one of the partitions is
non-trivial.  Let $\hyper_{\ge \mathfrak p}$ be the subposet of
$\hyper_n$ consisting of elements greater than or equal to
$\tau(\A)_{{\mathfrak p}}$.  The associated order complexes
$|\hyper_{\ge \mathfrak p}|$ cover the peripheral complex.

Each $|\hyper_{\ge \mathfrak p}|$ is contractible as the vertex
associated to $\tau(\A)_{\mathfrak p}$ forms a cone point.  Further,
if ${\mathfrak{p}_1 , \ldots , \mathfrak{p}_s}$ is any collection of
non-trivial elements in $\Pi_1 \times \cdots \times \Pi_k$ then their
meet $ \mathfrak{p} = {\mathfrak{p}_1 \wedge \cdots \wedge
\mathfrak{p}_s}$ is a non-trivial element of $\Pi_1 \times \cdots
\times \Pi_k$ and
\[
|\hyper_{\ge \mathfrak{p}}| = |\hyper_{\ge \mathfrak{p}_1}|
\cap \cdots \cap  |\hyper_{\ge \mathfrak{p}_s}|\ .
\]
Thus we have covered the peripheral complex by contractible
subcomplexes whose intersections are also contractible.  By the
Quillen Fiber Lemma, the peripheral complex is homotopy equivalent to
the nerve of this covering.

Let $\hat{\mathfrak{p}}$ be the product of partitions given by totally
partitioning each $L_i$.  Then $|\hyper_{\ge \hat{\mathfrak{p}}}|$ is
a single vertex that is a cone point in the nerve of the covering.
Thus the nerve of the covering is contractible, hence so is the
$\A$--peripheral complex.
\end{proof}

The argument above may become less opaque if one consults the example
of a peripheral complex given in \fullref{fig:peripheral}.  The
original $\tau(\A)$ is shown in \fullref{fig:examplecone}, and the
single worrisome edge is $e = \{2, 3, 4, 5\}$.  The reader will find
that this peripheral complex has been covered by four contractible
subcomplexes corresponding to the four non-trivial partitions of $\{3,
4, 5\}$.

\begin{lem}\label{lem:inessential}
If $\A$ is inessential, then the collection of all entries in row $q$
of the first page that are given by having $\A \subset \B(\sigma)$
forms a sub-cochain complex whose cohomology is trivial in all
dimensions.
\end{lem}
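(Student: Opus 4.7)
The plan is to identify the sub-cochain complex for $\A$ with the relative cellular cochain complex $C^*(X,Y)$, where $X=|\hyper_{\ge\tau(\A)}|$ is the order complex of the upper set of the cone point and $Y$ is the $\A$--peripheral subcomplex. Both spaces are already known to be contractible: $X$ because $\tau(\A)$ is a cone point, and $Y$ by \fullref{lem:contractible}. Once this identification is in hand, the long exact sequence of the pair $(X,Y)$ forces $H^i(X,Y)=0$ in every degree, since the restriction $H^*(X)\to H^*(Y)$ is an isomorphism between copies of $\Z$ in degree zero and between zero groups in higher degrees.

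The first key step is to show that the $\A$--peripheral hypertrees form an \emph{upward-closed} subposet of $\hyper_{\ge\tau(\A)}$, equivalently that the $\A$--core is downward-closed. I would verify this combinatorially: if $\tau\le\tau'$ in $\hyper_n$ and some $\balpha_{Ij}\in\A$ lies outside $\B(\tau)$ because $I$ is a non-trivial union of components of $\tau\setminus\{j\}$, then refining $\tau$ to $\tau'$ only subdivides these components further, so $I$ remains such a non-trivial union in $\tau'\setminus\{j\}$ and $\balpha_{Ij}\notin\B(\tau')$.

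This downward-closedness yields the desired identification: a chain $\sigma=\tau_0<\cdots<\tau_k$ in $X$ has $\tau_0$ in the core if and only if at least one $\tau_i$ is in the core, which is exactly the condition that $\sigma$ is not contained in $Y$. So the simplices supporting $\A$ correspond bijectively to the cells of $X\setminus Y$. I would then verify that the equivariant differential on the $\A$--sub-cochain agrees with the relative cellular coboundary: inserting a hypertree above the current minimum leaves $\A^*$ fixed; inserting a new minimum in the core also leaves $\A^*$ fixed under restriction; and inserting a peripheral new minimum sends $\A^*$ either to zero or to some $\A'^*$ with $\A'\ne\A$, contributing only to some other sub-cochain complex.

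The main obstacle I expect is this last piece of bookkeeping --- tracing the restriction map $H^q(\Stab(\tau_0))\to H^q(\Stab(\tau))$ to confirm that $\A^*$ is preserved exactly when $\tau$ lies in the core and otherwise leaves the $\A$--component. This boils down to the observation that each one-dimensional basis element $\balpha_{Ij}^*$ restricts to the dual of the component of $\tau\setminus\{j\}$ containing $I$, and this equals $\balpha_{Ij}^*$ precisely when $I$ is itself a component of $\tau\setminus\{j\}$, i.e., when $\tau$ belongs to the core.
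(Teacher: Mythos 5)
Your proposal is correct and reaches the same conclusion through a genuinely different topological reduction than the paper uses. The paper's proof is a discrete Morse--style pairing: a chain $\tau_0 < \cdots < \tau_p$ with $\tau_0 \ne \tau(\A)$ is matched with $\tau(\A) < \tau_0 < \cdots < \tau_p$, and after cancelling these pairs the surviving generators are exactly the chains $\tau(\A) < \tau_1 < \cdots < \tau_k$ with all $\tau_i$ ($i\ge 1$) peripheral, which one recognizes as the (shifted) augmented cochain complex of the $\A$--peripheral complex; acyclicity then follows from \fullref{lem:contractible}. You instead identify the $\A$--sub-cochain complex directly with the relative cochain complex $C^*(X,Y)$ for $X = |\hyper_{\ge \tau(\A)}|$ and $Y$ the peripheral subcomplex, and invoke the long exact sequence of the pair together with the contractibility of both $X$ (cone point $\tau(\A)$) and $Y$ (\fullref{lem:contractible}). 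The two arguments are essentially the same computation dressed differently --- the paper's pairing is precisely a chain-level proof that $C^*(X,Y)$ is acyclic --- so neither is ``more'' than the other, but your version makes visible a fact that the paper uses only implicitly: that the $\A$--peripheral hypertrees form an upward-closed subset of $\hyper_{\ge \tau(\A)}$ (equivalently the $\A$--core is downward-closed). This is exactly what lets the paper assert that every $\tau_i$ with $i\ge 1$ in a surviving chain is peripheral, and it is also what you need for $Y$ to be a genuine subcomplex and for ``not contained in $Y$'' to coincide with ``minimum element lies in the core.'' Your combinatorial justification is right: if $\tau \le \tau'$ then the components of $\tau\setminus\{j\}$ are unions of components of $\tau'\setminus\{j\}$, so if $I$ is simultaneously a union of components of $\tau\setminus\{j\}$ (forced by $\balpha_{Ij} \in \Stab(\tau)$) and a single component of $\tau'\setminus\{j\}$, then $I$ must already be a single component of $\tau\setminus\{j\}$. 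The final bookkeeping you flag --- checking that the restriction maps send $\A^*$ to $\A^*$ along core-to-core inclusions and out of the $\A$--summand otherwise --- is indeed the one point that both you and the paper treat lightly; it is needed to justify that the row of the $E_1$ page really decomposes as a direct sum of these $\A$--sub-cochain complexes, and your sketch of how $\balpha_{Ij}^*$ restricts is the right ingredient to supply it.
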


\begin{proof}
Let $\A \subset \B(\sigma)$.  Then $\sigma$ corresponds to a chain of
hypertrees $\tau_0 < \cdots < \tau_p$ and $\A \subset \B(\tau_0)$.
Since $\tau(\A) \le \tau_0$ we can pair $p$--simplices where $\tau_0
\ne \tau(A)$ with $(p+1)$--simplices formed by adding $\tau(\A)$ to the
chain:
\[
\underbrace{\tau_0 < \cdots < \tau_p}_{\sim \sigma} \leftrightarrow
 \underbrace{\tau(\A) < \tau_0 < \cdots < \tau_p}_{\sim \sigma'}\ .
\]
We can remove the terms corresponding to paired simplices in the
original cochain complex to form a cochain complex with equivalent
cohomology; in other words we may restrict ourselves to cochains with support
on chains with initial element $\tau(\A)$.  Thus the terms in this new cochain complex correspond to
chains of hypertrees of the form $\tau(\A) < \tau_1 < \cdots < \tau_k$
where each $\tau_i$ (for $i \ge 1$) is in the $\A$--peripheral
subcomplex of $|\hyper_n|$.  In dimension zero there is a single
$\Z$ corresponding to $\tau(\A)$.  In general, in dimension $k$ there
is a $\Z^{p(k)}$ if $p(k)$ is the number of $(k-1)$--simplices in the
$\A$--peripheral subcomplex.  Thus the new cochain complex is simply
the augmented cochain complex for the $\A$--peripheral complex (with
indices shifted by one).  Since the $\A$--peripheral complex is
contractible (\fullref{lem:contractible}) its reduced cohomology is
trivial hence this cochain complex is acyclic.
\end{proof}

\section{Computing ranks via planted forests}

We now know that the $E_2$ page consists of a single column and the
entries are $\Z^i$ where $i$ counts the number of essential sets of
one--two basis elements.  By its definition, every essential set of
compatible one--two basis elements forms the basis of a hypertree with
at most one edge of order $>2$.  These hypertrees can be described
using planted forests.

Given a planted forest on $[n]$, $f$, let $\tau_{f}$ be the hypertree
whose edges consist of the edges of $f$ and one additional (hyper)edge
consisting of the roots of $f$.  That is, one forms the hypertree
$\tau_f$ by gathering the roots of $f$.  (See
\fullref{fig:gathering}.)

\begin{figure}[ht!]\small
\begin{center}
\labellist
\pinlabel $a$ at 38 164  
\pinlabel $b$ at 182 164  
\pinlabel $c$ at 327 164  
\pinlabel $d$ at 471 164  
\pinlabel $e$ at 38 18  
\pinlabel $f$ at 182 19  
\endlabellist
\includegraphics[width=2.6in]{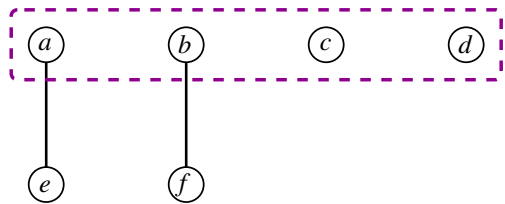}
 \caption{Gathering a planted forest with four components on $[6]$ into
a hypertree}
 \label{fig:gathering}
\end{center}
\end{figure}

\begin{lem}\label{lem:gatheringdescription}
A rank $q$ hypertree $\tau$ is \emph{essential} if and only if:
\begin{enumerate}
\item $\tau$ is formed by gathering the roots of a planted forest $f$
on $[n]$ with (n-q) components; and
\item The vertex labelled $1$ is a root of $f$ and it is not the root
of the tree containing the vertex labelled $2$.
\end{enumerate}
\end{lem}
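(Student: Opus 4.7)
The plan is to exhibit the gathering construction as the desired bijection, relying on one structural fact about hypertrees: removing a single hyperedge $e$ from a hypertree $\tau$ yields a hyperforest with exactly $|e|$ components, each containing exactly one vertex of $e$. This follows from the no-simple-cycle axiom and is the tool that lets us reverse the gathering operation.

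For the ``if'' direction, suppose $\tau = \tau_f$ for a planted forest $f$ on $[n]$ with $k = n-q$ components satisfying (2). The edges of $\tau_f$ are the size-two edges of $f$ together with the single root edge of size $k$. Since $|E(f)| = n-k$, we get $\rk(\tau_f) = (n-k+1)-1 = q$. The only possible fat edge is the root edge, and it is fat precisely when $k > 2$; in that case it contains vertex $1$ (since $1$ is a root), and the reduced path from $1$ to $2$ must cross the root edge because $2$ lies in a tree rooted at some $r \neq 1$ and every path to $1$ must leave that component. When $k \le 2$ there is no fat edge at all, so conditions (2) and (3) of essentiality are vacuous. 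Either way $\tau_f$ is essential.

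For the ``only if'' direction, let $\tau$ be essential of rank $q$ and split into two cases. If $\tau$ has a fat edge $e_0$, then by essentiality it is the unique fat edge, contains $1$, and lies on the $1$--$2$ path. Deleting $e_0$ produces $|e_0|$ components by the fact above; since $e_0$ was the only fat edge each component is an ordinary tree. Planting the component of each $v \in e_0$ at $v$ gives a planted forest $f$ with $|e_0|$ components and $\tau_f = \tau$. The count $\rk(\tau) = |E(f)| = n - |e_0|$ forces $|e_0| = n-q$, and because $1$ and $2$ are separated by $e_0$ they land in distinct components, so (2) holds. If $\tau$ has no fat edge it is an ordinary tree on $[n]$ of rank $q = n-2$; let $v$ be the neighbor of $1$ on the unique $1$--$2$ path, delete $\{1,v\}$, and root the two resulting trees at $1$ and $v$. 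Gathering restores the deleted edge as a two-element (non-fat) root edge, giving $\tau_f = \tau$; the component count is $2 = n-q$ and (2) is immediate.

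The argument is essentially careful bookkeeping, and I expect the only real subtlety to be the reminder that the gathered root edge is fat only when it has more than two vertices. This forces the separate treatment of the ordinary-tree case in the ``only if'' direction, which otherwise mirrors the fat-edge case exactly.
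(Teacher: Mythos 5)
The paper states this lemma without proof, evidently treating it as a direct unwinding of the definitions of \emph{essential} and of the gathering construction, so there is no argument in the paper to compare against. Your proof is correct and supplies the omitted details. The structural fact you isolate---that deleting a hyperedge $e$ from a hypertree produces a hyperforest with exactly $|e|$ components, each meeting $e$ in one vertex---is exactly what inverts gathering, and the split between the fat-edge case and the ordinary-tree case correctly tracks whether the forced root edge has order greater than two. One step that deserves a sentence: in the fat-edge case, ``$1$ and $2$ are separated by $e_0$'' follows because $1 \in e_0$ forces $e_0$ to be the first edge of the reduced $1$--$2$ path, so $2$ lies in the component of some vertex of $e_0$ other than $1$; this is what yields condition (2). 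A side remark: the count that follows the lemma in the paper implicitly uses that the forest $f$ is uniquely determined by an essential $\tau$, and your construction in fact exhibits the only possibility (the root edge is forced to be the unique fat edge, or, for an ordinary tree, the unique edge at $1$ lying on the $1$--$2$ path), so the correspondence really is a bijection.
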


There is a bijection between essential sets of one--two generators with
$q$ elements and rank $q$ essential hypertrees.  Thus it suffices to
count the rank $q$ essential hypertrees.  The number of $(n-q)$
component planted forests on $[n]$ is
\[
p_{n-q}(n) = \binom{n-1}{q} n\sp{q}
\]
(See  Proposition 5.3.2 of \cite{StVol2}.)

Because of our second condition in
\fullref{lem:gatheringdescription}---on the location of the vertex
labelled by $1$---not all hypertrees formed by gathering forests are
essential.  If any of the non-root vertices are labelled $1$ we would
be unhappy.  Similarly, referring to \fullref{fig:gathering}, if
$a=1$ then we would not want $e=2$.  That is, $2$ should not label a
vertex in the tree rooted by $1$.

So given an unlabelled planted forest with $k$ components and $n$
vertices, what fraction of the labellings lead to essential
hypertrees?  First, place $2$.  There are then $(n-1)$ places you
could put $1$.  Of these only the $(n-q-1)$--roots that are not the
roots of the tree containing $2$ will lead to essential hypertrees.
Thus the number of essential hypertrees can be gotten by taking all
hypertrees and multiplying by $\frac{n-q-1}{n-1}$.

\begin{lem}\label{lem:rank}
The number of essential hypertrees in $\hyper_n$ of rank $q$   is
\[
\binom{n-2}{q} n\sp{q}\ .
\]
Therefore $\rk\left[H^q(\opsig_n,\Z) \right]= \binom{n-2}{q} n\sp{q}$.
\end{lem}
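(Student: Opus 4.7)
The plan is to combine the gathering bijection from \fullref{lem:gatheringdescription} with the standard enumeration of planted forests, and then to read off the rank of $H^q(\opsig_n,\Z)$ from the spectral sequence collapse proved in the previous section. By \fullref{lem:gatheringdescription}, essential rank-$q$ hypertrees in $\hyper_n$ correspond bijectively to labelled planted forests on $[n]$ with $n-q$ components in which the vertex labelled $1$ is a root and is not the root of the tree containing $2$. The cited formula $p_{n-q}(n)=\binom{n-1}{q}\,n^q$ counts all planted forests of the required shape, so the problem reduces to computing the fraction of labellings that also satisfy the extra root condition.

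For this fraction I would argue by placing the special labels one at a time. First put $2$ in any of the $n$ vertex slots. Of the remaining $n-1$ positions, the label $1$ must occupy a root of the underlying shape that is not the root of whichever component now contains $2$; regardless of where $2$ landed, this leaves exactly $n-q-1$ admissible slots. Hence the essential fraction is $(n-q-1)/(n-1)$, and the total essential count simplifies as
\[
\binom{n-1}{q}\, n^q \cdot \frac{n-q-1}{n-1} \;=\; \binom{n-2}{q}\, n^q,
\]
after cancelling one factor of $(n-1)$ from the top of $\binom{n-1}{q}$ and one factor of $(n-q-1)$ from the bottom.

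To deduce the rank of $H^q(\opsig_n,\Z)$ I would then assemble \fullref{prop:collapse} together with \fullref{lem:essential} and \fullref{lem:inessential}: the $E_2$ page is concentrated in the $p=0$ column, each essential compatible collection of size $q$ contributes one $\Z$ summand to $E_2^{0,q}$, and each inessential collection contributes nothing. Because the page lives entirely in a single column all higher differentials are forced to vanish, so $E_2=E_\infty$ and $H^q(\opsig_n,\Z)\cong E_2^{0,q}$. Finally \fullref{lem:cone-prop} supplies the bijection $\A\mapsto\tau(\A)$, $\tau\mapsto\B(\tau)$ between essential compatible collections of size $q$ and essential hypertrees of rank $q$, so this rank equals the count from the first step. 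I expect the main point requiring care to be the uniformity of the fraction $(n-q-1)/(n-1)$: one needs to confirm that this number is independent both of where label $2$ has been placed and of the isomorphism type of the underlying forest shape. This is true because the essential condition only sees the total number of roots and the root of $2$'s component, both of which are determined from $q$ alone.
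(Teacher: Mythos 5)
Your proposal is correct and follows exactly the paper's route: the gathering bijection from Lemma~\ref{lem:gatheringdescription}, the count $\binom{n-1}{q}n^q$ of $(n-q)$-component planted forests, the correction factor $(n-q-1)/(n-1)$ obtained by placing $2$ first and then $1$, and finally the collapse of the spectral sequence to a single column to identify the rank of $H^q$. One small imprecision in your uniformity remark: the root of $2$'s component is of course not ``determined from $q$ alone''---what is uniform is that, whether or not $2$ itself is a root, exactly one of the $n-q$ root positions is forbidden to $1$, leaving $n-q-1$ admissible slots out of the $n-1$ available; your case analysis actually shows this, so the conclusion stands.
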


When $q$ is maximal (ie $q=n-2$) the hypertrees under consideration
aer ordinary trees, all of them are essential, and
\fullref{lem:rank} reproves the standard count of $n^{n-2}$ for the
number of trees with vertex set $[n]$.

\fullref{lem:rank} establishes our Main Theorem.  In order to prove
the Brownstein--Lee Conjecture we need a bit more information.  It
follows from McCool's presentation that $H_1(\psig_n, \Z)$ is free
abelian with one generator $[\alpha_{ij}]$ for each generator of
$\psig_n$.  The cohomology group $H^1(\psig_n, \Z)$ is then generated
by the dual basis $\{\alpha^*_{ij}\}$ where
\[
\alpha_{ij}^*([\alpha_{kl}]) = \left\{\begin{array}{ll}
1 & (k, l) = (i, j)\\ 
0 & \mbox{otherwise}.\\ \end{array}
\right.
\]
Similarly $\opsig_n$ is generated by $\{\balpha_{ij}~|~i \ne j, i \ne
1, \mbox{ and }i \ne 2 \mbox{ if } j=1\}$, its first homology is
generated by the associated $[\balpha_{ij}]$ and $H^1(\opsig_n, \Z)$
is generated by the dual basis $\alpha_{ij}^*$.  
The argument in \fullref{sec:Etwo}, establishing that the $E_2$ page
is concentrated in a single column (\fullref{prop:collapse}), gives 
an explicit description of the groups $E_2^{0,q}$.  Namely, they are free abelian
where the elements in our chosen generating set correspond to
products of the elements $\displaystyle \alpha_{I,j}^* = \sum_{i \in I} \alpha_{ij}^*$.
These cohomology classes $\alpha_{I,j}^*$ come from a generating set
for the stablizer of an  essential hypertree, hence
 it must be the case that for any $i \in I$, $i \not = 1$ and
$i \not = 2$ if $j=1$.  Thus we get:

\begin{prop}\label{prop:outproductstruct} 
The algebra $H^*(\opsig_n, \Z)$
is generated by the one-dimensional classes $\balpha_{ij}^*$ where $i \not = j$, 
$i \not = 1$ and
$i \not = 2$ if $j=1$.
\end{prop}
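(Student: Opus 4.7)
The plan is to make precise the sketch given in the paragraph preceding the statement. By \fullref{prop:collapse} the $E_2$ page of the equivariant spectral sequence for the action $\opsig_n \curvearrowright \MM_n$ is concentrated in the $p = 0$ column, so the sequence degenerates at $E_2$ and $H^q(\opsig_n,\Z) \cong E_2^{0,q}$ for every $q$. The proofs of \fullref{lem:essential} and \fullref{lem:inessential} also produce an explicit generating set for $E_2^{0,q}$: one generator for each essential compatible collection $\A = \{\balpha_{I_1,j_1}, \ldots, \balpha_{I_q,j_q}\}$, represented on the cone point $\tau(\A)$ by the cup product $\balpha_{I_1,j_1}^* \smile \cdots \smile \balpha_{I_q,j_q}^*$ in the exterior algebra $H^*(\Stab(\tau(\A)),\Z)$.

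The central identification is $\balpha_{I,j}^* = \sum_{i \in I} \balpha_{ij}^*$ after restriction to any stabilizer containing $\balpha_{I,j}$. Because $\Stab(\tau)$ is free abelian, the product $\balpha_{I,j} = \prod_{i \in I} \balpha_{ij}$ abelianizes to $[\balpha_{I,j}] = \sum_{i \in I}[\balpha_{ij}]$ in $H_1(\Stab(\tau),\Z) = \Stab(\tau)$. Dualizing, the restriction of the global class $\sum_{i \in I} \balpha_{ij}^* \in H^1(\opsig_n,\Z)$ to $\Stab(\tau)$ is precisely $\balpha_{I,j}^*$ whenever $\balpha_{I,j} \in \B(\tau)$. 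Since the equivariant spectral sequence is multiplicative (compare \S VII.7 of \cite{Br94}) and collapses at $E_2$, the generator of $E_2^{0,q}$ attached to $\A$ is the image of the global cup product $\prod_{k=1}^{q} \left( \sum_{i \in I_k} \balpha_{ij_k}^* \right)$ under the edge homomorphism. Hence the subalgebra of $H^*(\opsig_n,\Z)$ generated by the one-dimensional classes $\balpha_{ij}^*$ exhausts $H^*(\opsig_n,\Z)$.

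The index constraints stated in the proposition follow directly from the definition of a one-two outer-automorphism: each $\balpha_{I_k,j_k}$ appearing in an essential collection satisfies $I_k \subset [n] \setminus \{j_k, \hat{j}_k\}$, so each factor $\balpha_{ij_k}^*$ that enters the expression has $i \neq j_k$, $i \neq 1$, and $i \neq 2$ when $j_k = 1$. The step I expect to be most delicate is the multiplicativity argument, ensuring that genuinely global cup products of degree-one classes account for each prescribed generator of $E_2^{0,q}$ rather than merely yielding classes that restrict correctly to one stabilizer at a time; once that is in place, the proposition follows at once from the surjectivity just described.
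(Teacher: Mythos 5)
You are following the paper's own (quite terse) outline, but the central identity you assert---that the restriction of the global class $\sum_{i\in I}\balpha_{ij}^*$ to $H^1(\Stab(\tau))$ is precisely the dual basis element $\balpha_{I,j}^*$---is off by a scalar that matters over $\Z$. For each \emph{single} $i\in I$, the restriction of $\balpha_{ij}^*$ alone already equals $\balpha_{I,j}^*$: evaluating on $\balpha_{K,l}\in\B(\tau)$ gives $\balpha_{ij}^*\bigl(\sum_{k\in K}[\balpha_{kl}]\bigr)=\delta_{j,l}\cdot[i\in K]$, which is $1$ exactly when $(K,l)=(I,j)$, since $I$ is the unique component of $\tau$ with vertex $j$ deleted that contains $i$. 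Summing over $i\in I$ therefore yields $|I|\cdot\balpha_{I,j}^*$, not $\balpha_{I,j}^*$. (Your abelianization step also does not parse as written: the individual $\balpha_{ij}$ are generally not elements of $\Stab(\tau)$, so $[\balpha_{I,j}]=\sum_{i\in I}[\balpha_{ij}]$ makes sense only in $H_1(\opsig_n,\Z)$, not in $H_1(\Stab(\tau),\Z)=\Stab(\tau)$.) Consequently the global product $\prod_k\sum_{i\in I_k}\balpha_{ij_k}^*$ restricts to $\bigl(\prod_k|I_k|\bigr)$ times the prescribed generator of $E_2^{0,q}$, which proves generation only up to finite index, not over $\Z$.

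The fix is to use a \emph{single} representative rather than the sum: choosing one $i_k\in I_k$ for each $k$, the product $\prod_k\balpha_{i_kj_k}^*$ of genuine degree-one classes restricts to $\prod_k\balpha_{I_k,j_k}^*$ on $\Stab(\tau(\A))$ with no extraneous scalar. Even so, one must then argue that this global class hits the correct element of $E_2^{0,q}$ under the edge homomorphism rather than merely restricting correctly to the single stabilizer $\Stab(\tau(\A))$; this is exactly the multiplicativity/compatibility point you flag at the end and then leave unaddressed, and it is the genuine content of the proposition. To be fair, the published argument writes $\alpha_{I,j}^*=\sum_{i\in I}\alpha_{ij}^*$ and is equally silent on global compatibility, so your proposal faithfully reproduces the paper's sketch, gaps included---but the scalar identity cannot be read literally, and the final surjectivity onto $E_2^{0,q}$ still needs an argument.
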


\section{Computing $H^*(\mbox{P}\Sigma_n,\Z)$}
\label{sec:cohopsig}

Starting with the short exact sequence
\[
1\rightarrow \F_n \rightarrow \psig_n \rightarrow \opsig_n \rightarrow 1
\]
one can apply the Lyndon--Hochschild--Serre spectral sequence to compute
the cohomology groups $H^i(\psig_n,M)$.  Since $\mbox{cd}(\F_n) = 1$
this spectral sequence is concentrated in the bottom two rows of the
first quadrant (see \fullref{fig:sslhs}).  In our case, it is
easiest to understand the structure of this spectral sequence by
thinking in terms of a fibration, and appealing to the Leray--Hirsch
Theorem.  There is a fiber bundle $p: B\psig_n \rightarrow B\opsig_n$
with fiber $B\F_n$ which gives a fibration (see Theorem 1.6.11 and
Theorem 2.4.12 of \cite{Be91}).  The action of $\psig_n$ on $\F_n$
always sends a generator to a conjugate of itself and so the induced
action on $H^1(\F_n, \Z)= \Z^n$ is trivial; therefore $E_2^{*,1} =
H^*(\opsig_n, H^1(\F_n, \Z)) = H^*(\opsig_n, \Z^n)$ with the action of
$\opsig_n$ on $\Z^n$ being trivial.  Hence the system of local
coefficients in the spectral sequence is simple.

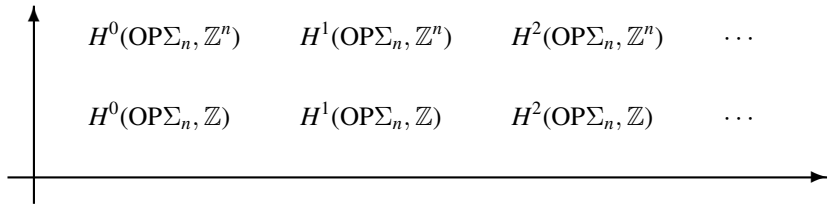
\begin{figure}[ht!]\small
\begin{center}
\begin{picture}(320,80)(-10, -10)
\thicklines
\put(-10,0){\vector(1,0){310}}
\put(0,-10){\vector(0,1){75}}
%
\put(20,20){$\displaystyle H^0(\opsig_n, \Z)$}
\put(20,50){$\displaystyle 
	H^0(\opsig_n, \Z^n)$}

\put(100,20){$\displaystyle H^1(\opsig_n, \Z)$}
\put(100,50){$\displaystyle 
	H^1(\opsig_n, \Z^n)$}

\put(180,20){$\displaystyle H^2(\opsig_n, \Z)$}
\put(180,50){$\displaystyle 
	H^2(\opsig_n, \Z^n)$}

\put(260,20){$\cdots$}
\put(260,50){$\cdots$}

\end{picture}
\end{center}
 \caption{The $E_2$ page of the Lyndon--Hochschild--Serre
 spectral sequence for $1 \rightarrow \F_n \rightarrow \psig_n
\rightarrow \opsig_n \rightarrow 1$}
 \label{fig:sslhs}
 \end{figure}

We remind the reader of the Leray--Hirsch Theorem, as presented in \cite{Ha02}:

\begin{thm}[Leray--Hirsch Theorem]\label{thm:leray-hirsch}
Let $F \stackrel{\iota}{\rightarrow} E \stackrel{\rho}{\rightarrow} B$
be a fiber bundle such that
\begin{enumerate}
\item $H^n(F,\Z)$ is a finitely generated free $\Z$--module for each
$n$, and
\item there exist classes $c_j \in H^{k_j}(E, \Z)$ whose restrictions
$\iota^*(c_j)$ form a basis for $H^*(F, \Z)$ in each fiber $F$.
\end{enumerate}
Then the map $\Phi: H^*(B, \Z) \otimes H^*(F, \Z) \rightarrow H^*(E,
\Z)$ given by
\[
\sum_{i,j} b_i \otimes \iota^*(c_{j}) \mapsto \sum_{i,j} \rho^*(b_i) \cup c_{j}
\]
is an isomorphism.
\end{thm}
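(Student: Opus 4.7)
The plan is to apply the cohomological Serre spectral sequence to the fibration $F \stackrel{\iota}{\rightarrow} E \stackrel{\rho}{\rightarrow} B$. Because $H^*(F,\Z)$ is finitely generated and free in each degree by hypothesis (1), the $E_2$ page has the form $E_2^{p,q} = H^p(B, H^q(F,\Z))$. A preliminary observation is that the local coefficient system is trivial: the classes $\iota^*(c_j)$ form a $\Z$-basis of $H^*(F,\Z)$ and are restrictions of globally defined cohomology classes on $E$, so the $\pi_1(B)$-monodromy fixes a basis and therefore acts trivially. Hence $E_2^{p,q} \cong H^p(B,\Z) \otimes H^q(F,\Z)$, and the tensor product on the left of the claimed isomorphism appears naturally as the $E_2$ page.

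The core step is to show the spectral sequence collapses at $E_2$. Each $c_j$ is a genuine cohomology class on $E$, so it is a permanent cycle, and under the edge homomorphism $H^*(E,\Z) \to E_\infty^{0,*} \hookrightarrow E_2^{0,*}$ it maps to $\iota^*(c_j)$. Thus every basis element of $E_2^{0,*}$ lifts to a permanent cycle. Using the multiplicative structure of the spectral sequence together with the factorization $E_2^{p,q} \cong E_2^{p,0} \cdot E_2^{0,q}$, every class in $E_2$ is a product of permanent cycles and is therefore itself a permanent cycle. Consequently every differential vanishes and $E_\infty \cong E_2 \cong H^*(B,\Z) \otimes H^*(F,\Z)$.

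It remains to upgrade this graded identification to the statement about $\Phi$. The map $\Phi$ respects the Serre filtration on $H^*(E,\Z)$: the element $\rho^*(b_i) \cup c_j$ lies in filtration degree equal to the degree of $b_i$, and its class on the associated graded is exactly $b_i \otimes \iota^*(c_j) \in E_\infty^{*,*}$, which is an isomorphism by the previous paragraph. The main obstacle is the extension problem: knowing $\Phi$ is an isomorphism on graded pieces, I must conclude it is an isomorphism of abelian groups in each total degree. The freeness hypothesis is critical here, since it makes every $E_\infty^{p,q}$ a free $\Z$-module, so each short exact sequence
\[
0 \to F^{p+1}H^n(E,\Z) \to F^p H^n(E,\Z) \to E_\infty^{p, n-p} \to 0
\]
splits. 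An induction on filtration degree together with the five lemma then promotes the associated-graded isomorphism to the asserted isomorphism $\Phi$, completing the proof.
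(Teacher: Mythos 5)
The paper does not prove the Leray--Hirsch theorem; it simply records the statement as a reminder and cites Hatcher's \emph{Algebraic Topology}, so there is no paper proof to match your argument against. Hatcher's own proof, for what it is worth, avoids spectral sequences entirely: he runs an induction over the cells of a CW model for $B$ using Mayer--Vietoris and the five lemma directly. Your proof via the Serre spectral sequence is the other standard route and is essentially correct, and your observation that the local coefficient system is trivial because the monodromy fixes the basis $\{\iota^*(c_j)\}$ pointwise is exactly the point that is often skipped.

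One claim in your last paragraph is false as stated, though it does not sink the argument: you assert that $E_\infty^{p,q}\cong H^p(B,\Z)\otimes H^q(F,\Z)$ is a free $\Z$--module. The hypothesis makes $H^q(F,\Z)$ free, but nothing constrains $H^p(B,\Z)$, which may have torsion (take $B=\mathbb{RP}^2$, say), so $E_\infty^{p,q}$ need not be free and the exact sequences of the filtration need not split. Fortunately, you do not actually need the splitting. You have already produced an explicit filtration-preserving map $\Phi$ (filtering the tensor product by degree in $H^*(B)$) and checked it induces an isomorphism on each associated graded piece; for a map of finitely-filtered abelian groups this alone, by the five lemma and downward induction on filtration degree, forces $\Phi$ to be an isomorphism, with no freeness required. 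So the correct statement is that freeness of $H^*(F,\Z)$ is needed to identify $E_2^{p,q}$ with $H^p(B,\Z)\otimes H^q(F,\Z)$ via the universal coefficient theorem, not to resolve the extension problem. With that correction the proof is sound.
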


The first condition of the Leray--Hirsch Theorem is immediately
satisfied since the kernel we are interested in is a free group.  We
turn then to establishing that the fiber $B\F_n$ is totally
non-homologous to zero in $B\psig_n$ (with respect to $\Z$).

\begin{lem}\label{lem:totallynonhomologoustozero}
The space $B\F_n$ is totally non-homologous to zero in $B\psig_n$ with
respect to $\Z$.  In fact, let $c_{0,0}$ be a generator for
$H^0(B\psig_n, \Z)$ and let $c_{1,1}, \ldots, c_{1,n}$ denote the
duals in $H^1(B\psig_n, \Z) = Hom((B\psig_n)_{ab}, \Z)$ of maps
corresponding to conjugating by the generators of $\F_n$.  Then the
collection $\{\iota^*(c_{r,s})\}$ forms an additive basis for
$H^*(B\F_n, \Z)$, where $\iota: B\F_n \to B\psig_n$ is inclusion.
\end{lem}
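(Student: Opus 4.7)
The plan is to verify the two hypotheses of \fullref{thm:leray-hirsch} for the fibration $B\F_n \to B\psig_n \to B\opsig_n$. The first hypothesis is automatic: because $\F_n$ is free of rank $n$, the cohomology $H^*(B\F_n,\Z)$ is $\Z$ in degree $0$, $\Z^n$ in degree $1$, and vanishes otherwise, so it is finitely generated and free in each dimension. For the second hypothesis I must produce classes in $H^*(B\psig_n,\Z)$ whose pullbacks under $\iota$ form a basis of $H^*(B\F_n,\Z)$; equivalently, I must show that $\iota^*\colon H^1(B\psig_n,\Z)\to H^1(B\F_n,\Z)$ hits a chosen basis. The class $c_{0,0}$ is dealt with by the constant $1\in H^0$.

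First I would identify the inclusion $\iota\colon \F_n\hookrightarrow \psig_n$ explicitly. Its image is $\Inn(\F_n)$, and the generator $x_j$ corresponds to conjugation by $x_j$, which sends $x_l$ to $x_j x_l x_j^{-1}$ for $l\ne j$ and fixes $x_j$. Using McCool's relation $[\alpha_{ij},\alpha_{kj}]=1$ (for $i,j,k$ distinct) to commute the factors, this inner automorphism equals the well-defined product
\[
\iota(x_j)\;=\;\alpha_{[n]\setminus\{j\},\,j}\;=\;\prod_{k\ne j}\alpha_{kj}.
\]
By McCool's presentation, $\psig_n^{ab}$ is free abelian on $\{[\alpha_{ij}]\}_{i\ne j}$, so $[\iota(x_j)] = \sum_{k\ne j}[\alpha_{kj}]$. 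Since the supports of these sums (distinguished by the second index $j$) are pairwise disjoint as $j$ ranges over $[n]$, the elements $[\iota(x_1)],\ldots,[\iota(x_n)]$ are linearly independent in $\psig_n^{ab}$.

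With this translation in hand, the construction of the $c_{1,j}$ is immediate. Taking $\{\alpha_{ij}^*\}$ as the basis of $H^1(B\psig_n,\Z)=\mathrm{Hom}(\psig_n^{ab},\Z)$ dual to $\{[\alpha_{ij}]\}$, I would fix any $k_j\ne j$ for each $j$ and set $c_{1,j}:=\alpha_{k_j j}^*$. Then
\[
\iota^*(c_{1,j})(x_l)\;=\;c_{1,j}\bigl([\iota(x_l)]\bigr)\;=\;\begin{cases}1 & l=j,\\ 0 & l\ne j,\end{cases}
\]
because $[\iota(x_l)]=\sum_{k\ne l}[\alpha_{kl}]$ contains the generator $[\alpha_{k_j j}]$ precisely when $l=j$. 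Hence $\iota^*(c_{1,j})$ is the dual of $x_j$ in $H^1(B\F_n,\Z)$, and together with $\iota^*(c_{0,0})=1$ these restrictions form the claimed additive basis for $H^*(B\F_n,\Z)$.

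The only substantive ingredient is recognising the inner automorphism corresponding to $x_j$ as the specific McCool product $\prod_{k\ne j}\alpha_{kj}$; once this is in place the surjectivity of $\iota^*$ and the explicit choice of $c_{1,j}$ are transparent, so I would not expect a genuine obstacle beyond bookkeeping.
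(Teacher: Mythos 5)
Your argument follows essentially the same route as the paper: both identify $\iota(x_j) = \alpha_{[n]\setminus\{j\},j} = \prod_{k\ne j}\alpha_{kj}$ via McCool's relations, use McCool's presentation to see that $(\psig_n)_{ab}$ is free abelian on the $[\alpha_{ij}]$, and reduce the lemma to surjectivity of $\iota^*$ in degree one, with degree zero and the vanishing of $H^{\ge 2}(\F_n)$ handling the rest. If anything, your explicit choice $c_{1,j} = \alpha_{k_j j}^*$ is slightly tidier than the paper's inference that injectivity of $(\F_n)_{ab}\to(\psig_n)_{ab}$ yields surjectivity of the dual map (which over $\Z$ requires the injection to be split, a fact your disjoint-support observation delivers for free).
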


\begin{proof}
The inner automorphism of $\F_n$ given by conjugating by a basis
element $x_j$ is sent to the symmetric automorphism
\[
\alpha_{[n]\setminus \{j\}, j} = \alpha_{1j} \cdot \alpha_{2j} \cdots
\hat \alpha_{jj} \cdots \alpha_{nj} 
\]
under the injection $\F_n \hookrightarrow \psig_n$.  As the
$\alpha_{ij}$ form a generating set for $\psig_n$ that projects to a
minimal generating set for $H_1(\psig_n, \Z)$, the map
\[
(\F_n)_{ab} \rightarrow (\psig_n)_{ab}
\]
is injective.  Hence the map
\[
H^1(\psig_n, \Z) = \mbox{Hom}((\psig_n)_{ab}, \Z) \rightarrow
H^1(\F_n, \Z) = \mbox{Hom}((\F_n)_{ab}, \Z)
\]
is onto.  Since the cohomology of $\F_n$ is only located in degrees 0
and 1, this means that the map $H^*(\psig_n, \Z) \rightarrow H^*(\F_n,
\Z)$ is onto.
\end{proof}

Having satisfied the hypotheses we may apply the Leray--Hirsch Theorem
to obtain

\begin{lem}\label{lem:lerayhirsch}
The map 
\[
 H^*(\opsig_n, \Z) \otimes H^*(\F_n, \Z) \to H^*(\psig_n, \Z)
\]
defined by 
\[
\sum_{j,s} b_j \otimes i^*(c_{r,s}) \mapsto \sum_{j,s} p^*(b_j) \cup c_{r,s}
\]
is an isomorphism.  Furthermore, the above spectral sequence has
trivial differential $d_2$, and therefore the rank of $H^i(\psig_n,
\Z)$ is $\binom{n-1}{i} \cdot n^i$
\end{lem}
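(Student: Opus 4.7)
The plan is to deduce the result almost immediately from the Leray--Hirsch Theorem and then use a rank count to pin down the behavior of the spectral sequence. The hypotheses of the Leray--Hirsch Theorem have already been verified: the first condition holds because $H^*(\F_n,\Z)$ is concentrated in degrees $0$ and $1$ where it is $\Z$ and $\Z^n$ respectively, and the second condition is exactly the content of \fullref{lem:totallynonhomologoustozero}, where the classes $c_{0,0}$ and $c_{1,1},\ldots,c_{1,n}$ in $H^*(B\psig_n,\Z)$ are shown to restrict to a $\Z$--basis of $H^*(B\F_n,\Z)$ in each fiber. Plugging into \fullref{thm:leray-hirsch} with $F = B\F_n$, $E = B\psig_n$ and $B = B\opsig_n$ yields the displayed isomorphism $\Phi$ verbatim.

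Next I would extract the triviality of $d_2$ from this isomorphism by a rank comparison. Since $H^q(\F_n,\Z)$ vanishes for $q\ge 2$, the $E_2$ page of the LHS spectral sequence lives in only the two rows $q=0,1$, and since the action of $\opsig_n$ on $H^1(\F_n,\Z) = \Z^n$ is trivial we have $E_2^{p,q} = H^p(\opsig_n,\Z)^{\rk H^q(\F_n,\Z)}$. Thus the total rank in degree $i$ on the $E_2$ page equals $\rk H^i(\opsig_n,\Z) + n\cdot \rk H^{i-1}(\opsig_n,\Z)$. On the other hand, the Leray--Hirsch isomorphism gives
\[
\rk H^i(\psig_n,\Z) = \rk H^i(\opsig_n,\Z) + n \cdot \rk H^{i-1}(\opsig_n,\Z),
\]
matching the $E_2$ total rank. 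Since $d_2$ is the only possibly nonzero differential in this two-row sheet and $H^*(\psig_n,\Z)$ is the associated graded of a filtration on $E_\infty$, the only way the ranks can agree is if $d_2 = 0$ and the sequence collapses at $E_2$.

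Finally, to extract the numerical formula I would substitute the Main Theorem into the rank equation. Since $\rk H^k(\opsig_n,\Z) = \binom{n-2}{k}n^k$, we compute
\[
\rk H^i(\psig_n,\Z) = \binom{n-2}{i} n^i + n\cdot \binom{n-2}{i-1} n^{i-1} = \left[\binom{n-2}{i} + \binom{n-2}{i-1}\right] n^i = \binom{n-1}{i} n^i,
\]
where the last equality is Pascal's identity. This is the claimed rank, and summing gives the Poincar\'e series $(1+nz)^{n-1}$ promised in the Brownstein--Lee Conjecture. The main obstacle for this lemma is essentially already overcome in \fullref{lem:totallynonhomologoustozero}; the remaining work is just bookkeeping, with the only subtlety being the clean translation between the Leray--Hirsch additive isomorphism and the collapse of $d_2$ in the LHS spectral sequence.
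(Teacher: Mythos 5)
Your proof is correct and follows essentially the same route as the paper: verify the Leray--Hirsch hypotheses via \fullref{lem:totallynonhomologoustozero}, invoke \fullref{thm:leray-hirsch} for the isomorphism, and then compute ranks by adding the two rows of the $E_2$ page and applying Pascal's identity. The only difference is that you spell out the rank-comparison argument showing $d_2=0$ (which the paper leaves implicit as a consequence of the Leray--Hirsch isomorphism), and that expansion is sound.
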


\begin{proof}
That the map is an isomorphism follows from the Leray--Hirsch Theorem.
To compute the ranks of the cohomology groups, we note that
$$H^i(\psig_n, \Z) = H^{i-1}(\opsig_n, \Z^n) \times H^i(\opsig_n, \Z)$$
which by the Main Theorem means
\[
H^i(\psig_n, \Z) = \Z^{\binom{n-2}{i-1} \cdot n^{i-1} \cdot n}
 		\times \Z^{\binom{n-2}{i} \cdot n^i}
= \Z^{\left[\binom{n-2}{i-1}+ \binom{n-2}{i}\right]\cdot n^i}
= \Z^{\binom{n-1}{i} \cdot n^i}\,. \proved
\]
\end{proof}

Thus at the level of abelian groups, we have the formula claimed in
\fullref{thm:productstruct} below.  At this point the only cause
for caution is that the Leray--Hirsch Theorem does not immediately
imply a \emph{ring} isomorphism.  This we establish via the next two
results.

\begin{cor}\label{cor:onedimintegral} 
The integral cohomology of $\psig_n$ is generated by one-dimensional
classes.
\end{cor}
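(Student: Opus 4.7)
The plan is to combine the Leray--Hirsch decomposition of \fullref{lem:lerayhirsch} with the fact that both tensor factors $H^*(\opsig_n,\Z)$ and $H^*(\F_n,\Z)$ are generated, as graded algebras, by one-dimensional classes. For $H^*(\opsig_n,\Z)$ this is precisely \fullref{prop:outproductstruct} (the classes $\balpha_{ij}^*$ generate), and for $H^*(\F_n,\Z)$ it is trivial, since $\F_n$ is a free group and its cohomology is concentrated in degrees $0$ and $1$ (where the degree one part is manifestly a module over the degree zero part).

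Concretely, given $w \in H^k(\psig_n,\Z)$, \fullref{lem:lerayhirsch} writes $w$ as a sum $\sum_j p^*(b_j) \cup c_{r_j,s_j}$, with $b_j \in H^*(\opsig_n,\Z)$ and each $c_{r,s}$ drawn from the distinguished set of classes in \fullref{lem:totallynonhomologoustozero}. Every $c_{1,s}$ already lies in $H^1(\psig_n,\Z)$ and $c_{0,0}$ is the unit. By \fullref{prop:outproductstruct} each $b_j$ is a polynomial in the one-dimensional classes $\balpha_{ij}^*$, and since $p^*$ is a ring homomorphism, $p^*(b_j)$ is the corresponding polynomial in the one-dimensional classes $p^*(\balpha_{ij}^*) \in H^1(\psig_n,\Z)$. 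Substituting these expressions into the sum presents $w$ as a sum of cup-products of one-dimensional classes, which is exactly the conclusion of the corollary.

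The only point requiring care is that \fullref{thm:leray-hirsch} gives a module isomorphism rather than a ring isomorphism, so one cannot simply transport an algebra presentation across $\Phi$. For the present corollary that is not an obstacle, because the claim concerns only generation as a graded algebra; the additive decomposition suffices once one recognizes that $p^*$ preserves cup-products and that the $c_{1,s}$ and the pulled-back classes $p^*(\balpha_{ij}^*)$ are all one-dimensional. The finer ring-theoretic content, which is needed to derive the Brownstein--Lee relations, will require separate work in the subsequent verification of the presentation.
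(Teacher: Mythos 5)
Your argument is correct and is essentially the same as the paper's: both decompose an element of $H^*(\psig_n,\Z)$ via the Leray--Hirsch isomorphism of \fullref{lem:lerayhirsch}, invoke \fullref{prop:outproductstruct} to write each $b_j$ as a product of one-dimensional classes, and use that $p^*$ is a ring homomorphism to conclude that $p^*(b_j)$ is likewise a product of one-dimensional classes. Your explicit treatment of the sum and of the unit class $c_{0,0}$ is slightly more careful than the paper's (which just exhibits a single term $p^*(b)\cup c_{1,s}$), but there is no substantive difference.
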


\begin{proof}
Consider an $x = p^*(b) \cup c_{1,s} \in H^*(\psig_n, \Z)$.  From
\ref{prop:outproductstruct}, $H^*(\opsig_n, \Z)$ is generated by
one-dimensional classes and so there are $d_1, \ldots, d_t \in
H^1(\opsig_n, \Z)$ such that $b = d_1 \cup \cdots \cup d_t$.  Hence
$p^*(b) = p^*(d_1 \cup \cdots \cup d_t) = p^*(d_1) \cup \cdots \cup
p^*(d_n)$ (cf property (3.7) in Chapter V of \cite{Br94}) is also a
product of one-dimensional classes.  So $x$ is a product of
one-dimensional classes.
\end{proof}

In their work on $H^*(\psig_n, \Z)$, Brownstein and Lee established the
following:

\begin{thm}[Theorems 2.10 and 2.11 in \cite{BrLe93}]\label{thm:BrLeeSummary}
The relations
\begin{enumerate}
\item $\alpha_{ij}^* \wedge \alpha_{ij}^* = 0$
\item $\alpha_{ij}^* \wedge \alpha_{ji}^* = 0$
\item $\alpha_{kj}^* \wedge \alpha_{ji}^* = (\alpha_{kj}^* - \alpha_{ij}^*) \wedge \alpha_{ki}^*$
\end{enumerate}
hold among the one-dimensional classes $\alpha_{ij}^*$ in $H^*(\psig_n, \Z)$, and---in the
case where $n=3$---they  present the  algebra $H^*(\psig_3, \Z)$.
\end{thm}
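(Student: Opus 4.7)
The theorem is due to Brownstein and Lee, so the plan is to reconstruct their argument. The statement has two distinct parts: verifying that the three relations hold in $H^*(\psig_n, \Z)$ for arbitrary $n$, and then, for $n = 3$ specifically, showing that the relations present the entire algebra.

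For the three relations, the plan is to work from McCool's presentation and exploit naturality of the cup product. Relation (1), $\alpha_{ij}^* \cup \alpha_{ij}^* = 0$, is 2-torsion in $H^2(\psig_n, \Z)$ by graded commutativity, so it suffices to control 2-torsion in the relevant summand of $H^2$; this reduces to a direct computation from McCool's presentation that Brownstein and Lee carry out as one of the main calculations of \cite{BrLe93}. Relations (2) and (3) require more care because they do not come from simple commutation identities — notably $\alpha_{ij}$ and $\alpha_{ji}$ do not commute in $\psig_n$. The approach I would take is to restrict along the inclusion of the finitely generated subgroup of $\psig_n$ containing only the $\alpha$'s appearing in the relation (a two-index subgroup for (2), a three-index subgroup for (3)) and verify the identity at the cochain level using McCool's restricted relators. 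Relation (3) is then obtained by extracting a 2-coboundary from the McCool relator $[\alpha_{ij}, \alpha_{ik}\alpha_{jk}] = 1$.

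For the $n = 3$ case, the group $\psig_3$ is small enough to attack directly. I would build an explicit finite $K(\psig_3, 1)$ — the quotient of the McCullough--Miller complex $\MM_3$ by the inner action is a natural candidate, since $\MM_3$ is low-dimensional and its structure is completely explicit — and compute $H^*(\psig_3, \Z)$ as an algebra from the corresponding cochain complex. The resulting algebra can then be matched against the candidate algebra generated by $\{\alpha_{ij}^*\}_{i \ne j}$ modulo the three families of relations, whose Poincar\'e series is a straightforward combinatorial calculation.

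The main obstacle is relation (3). Unlike (1), which is a shadow of graded commutativity, and unlike the routine piece of (2), relation (3) is a genuine three-index identity that is not implied by any simple commutation; its verification requires the full McCool relator $[\alpha_{ij}, \alpha_{ik}\alpha_{jk}] = 1$ and a careful cocycle-level manipulation. This is the technical heart of the Brownstein--Lee computation and the reason the theorem is nontrivial even in the case $n = 3$.
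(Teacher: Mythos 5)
This theorem is not proved in the paper at all; it is cited verbatim from Brownstein and Lee \cite{BrLe93} (their Theorems 2.10 and 2.11) and used as a black box, so there is no internal proof against which to measure your reconstruction. On the merits of the reconstruction itself, two concrete issues. For relation (1) the torsion argument is heavier than needed and somewhat misstated: any $\alpha \in H^1(G,\Z) = \mbox{Hom}(G,\Z)$ factors through $\Z$, and $H^2(\Z,\Z)=0$, so $\alpha \cup \alpha = 0$ by naturality alone, with no appeal to the torsion structure of $H^2(\psig_n,\Z)$. For relation (2), the soft argument is that $\langle \alpha_{ij}, \alpha_{ji}\rangle \cong \psig_2$ is free of rank $2$ (McCool's relators all require at least three indices, and $\mbox{cd}(\psig_2)=1$), and the forgetful \emph{retraction} $\psig_n \twoheadrightarrow \psig_2$ exhibits $\alpha_{ij}^*, \alpha_{ji}^*$ as pullbacks from $H^1(\psig_2,\Z)$; their cup product therefore lies in the image of $H^2(\psig_2,\Z)=0$. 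Your phrasing --- ``restrict along the inclusion of the two-index subgroup and verify at the cochain level'' --- goes in the wrong direction: restriction to a subgroup only shows a class dies after restriction, not that it dies in $H^*(\psig_n,\Z)$, and the restriction map is not injective here. It is the retraction (pullback) that does the work. The same repair is needed in your sketch of relation (3): once the identity is verified in $H^2(\psig_3,\Z)$, you must pull it back along $\psig_n \twoheadrightarrow \psig_3$, not merely restrict.

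The plan for the $n=3$ case also has a gap. The McCullough--Miller complex $\MM_3$ carries an $\opsig_3$--action with \emph{nontrivial} (indeed free abelian) vertex stabilizers, so the quotient $\opsig_3 \backslash \MM_3 = |\hyper_3|$ is not a classifying space, and there is no meaningful ``further quotient by the inner action'' that would yield a $K(\psig_3,1)$. What you can do is run the equivariant spectral sequence for $\opsig_3 \curvearrowright \MM_3$ to get $H^*(\opsig_3,\Z)$ and then a Leray--Hirsch step to recover $H^*(\psig_3,\Z)$ with its ring structure --- but that is precisely the machinery of Sections 4--6 of this paper applied at $n=3$, it requires both the rank count and the upper bound from the relations, and it is anachronistic: $\MM_n$ was introduced in 1996, three years after \cite{BrLe93}. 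Brownstein and Lee's $n=3$ calculation is a direct presentation-level computation, which is the technical heart you correctly identified in relation (3) but then routed around.
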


These relations give an upper bound on the 
rank of  $H^i(\psig_n, \Z)$.   

\begin{lem}\label{lem:boundranks}
The Brownstein--Lee relations imply that the rank of $H^i(\psig_n, \Z)$
is at most $\binom{n-1}{i} n^i$.
\end{lem}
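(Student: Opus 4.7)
My plan is to establish the upper bound by producing a spanning set of size $\binom{n-1}{i} n^i$ for the $\Z$-algebra $A$ presented by generators $\alpha^*_{ij}$ ($i \ne j$) modulo the three Brownstein--Lee relations.  Since $H^*(\psig_n,\Z)$ is generated by these one-dimensional classes (\fullref{cor:onedimintegral}) and the relations hold in $H^*(\psig_n,\Z)$ (\fullref{thm:BrLeeSummary}), there is a surjection $A \twoheadrightarrow H^*(\psig_n,\Z)$, so it suffices to bound the rank of $A^i$.

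I would index the spanning set by rooted forests on $[n]$ with $i$ edges.  To each such rooted forest $F$, associate the monomial
\[
m_F \;:=\; \bigwedge_{v} \alpha^*_{v,\,p_F(v)},
\]
where the wedge is taken (in a fixed order) over the non-root vertices $v$ of $F$ and $p_F(v)$ denotes the parent of $v$.  By the formula $p_{n-q}(n) = \binom{n-1}{q} n^q$ recalled from Proposition 5.3.2 of \cite{StVol2}, there are exactly $\binom{n-1}{i} n^i$ such forest monomials.

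The technical heart of the argument is a straightening algorithm showing that every degree-$i$ monomial in $A$ is a $\Z$-linear combination of the $m_F$'s.  Regard each factor $\alpha^*_{ij}$ as a directed edge $i \to j$ (child to parent); then $m_F$ is precisely the monomial whose edge set forms a directed graph with (a) distinct sources and (b) no directed cycles.  Relation (1) kills monomials with repeated factors, relation (2) kills those containing both $\alpha^*_{ij}$ and $\alpha^*_{ji}$, and relation (3) rearranged as
\[
\alpha^*_{ab} \wedge \alpha^*_{ac} \;=\; \alpha^*_{ab} \wedge \alpha^*_{bc} + \alpha^*_{cb} \wedge \alpha^*_{ac}
\]
eliminates any pair of factors sharing the first index $a$.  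A directed cycle $v_1 \to v_2 \to \cdots \to v_\ell \to v_1$ can be broken by applying relation (3) directly to one of its sub-paths $v_{s-1} \to v_s \to v_{s+1}$.

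The main obstacle will be proving termination of this straightening, since a cycle-break can reintroduce a shared source and a shared-source rewrite can create new cycles.  I would resolve this by choosing a well-founded monovariant---for example, an ordered pair consisting of a suitable invariant of the multiset of first-index multiplicities (compared in dominance order) together with a secondary measure detecting cycles---and verifying that every rewrite strictly decreases it.  Once termination is in hand, every monomial reduces to a $\Z$-linear combination of the $m_F$'s, and the count of rooted forests supplies the required bound $\binom{n-1}{i} n^i$.
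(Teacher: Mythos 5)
Your proposal follows essentially the same route as the paper's proof: view each degree-$i$ monomial as a directed graph on $[n]$, use the three relations to reduce to monomials whose graphs are planted forests, and invoke the count $\binom{n-1}{i}n^i$ of planted forests with $i$ edges. Your cycle-breaking via relation (3) applied to a sub-path $v_{s-1}\to v_s\to v_{s+1}$ is the same manipulation the paper packages as a closed-form ``telescoping'' identity: by induction on length, $\alpha_{ij}^*\alpha_{jk}^*\cdots\alpha_{st}^*$ is a $\Z$--linear combination of terms each containing $\alpha_{it}^*$, so any product of factors forming a directed cycle vanishes outright by relation (2). Your rearrangement of relation (3) to kill repeated first indices is exactly the paper's $\alpha_{kj}^*\alpha_{ki}^*=\alpha_{kj}^*\alpha_{ji}^*+\alpha_{ij}^*\alpha_{ki}^*$. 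The only real structural difference is that the paper treats the cycle case as a one-shot algebraic identity rather than as part of an iterative rewrite, which removes cycles from the termination problem entirely.

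The termination issue you flag for the shared-first-index rewrite is genuine, and the paper does not address it explicitly either; it simply asserts ``implying that we never need to repeat the first index.'' So your concern is legitimate, but you have not actually closed the gap---you only sketch a plan (``a suitable invariant of the multiset of first-index multiplicities together with a secondary measure detecting cycles'') without verifying that a monovariant with the stated properties exists. As written, both proofs have this soft spot; yours is more candid about it. One way to make progress is to exploit the telescoping identity first: show that any monomial whose underlying graph contains a cycle (directed or not, by induction on cycle length, using a shared-first-index vertex when the cycle is not directed) is zero, so that you may thereafter work only with monomials whose undirected graphs are forests. For forest monomials the rewrite $\alpha_{ka}^*\alpha_{kb}^*=\alpha_{ka}^*\alpha_{ab}^*+\alpha_{ba}^*\alpha_{kb}^*$ preserves acyclicity (removing one edge and adding another inside the same component of a forest cannot create a cycle once the removed edge disconnects the endpoints), which tames the problem considerably; you then still need a weight argument, but at least cycles never reappear. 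To submit this as a complete proof you would need to exhibit the monovariant explicitly and check that it strictly drops under every admissible rewrite.
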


\begin{proof}
The first relations combined with the fact that one-dimensional
classes generate $H^*(\psig_n, \Z)$ imply that we may form a basis for
$H^i(\psig_n, \Z)$ using $i$--fold products of the $\alpha_{ij}^*$ with
no repetitions.  We may rewrite the third set of relations as
\[
\alpha_{kj}^* \alpha_{ki}^* = \alpha_{kj}^*  \alpha_{ji}^* +
\alpha_{ij}^*\alpha_{ki}^* 
\]
implying that we never need to repeat the first index.  An induction
argument using the original expression for the third relation shows
that any telescoping sequence $\alpha_{ij}^* \alpha_{jk}^*
\alpha_{kl}^* \cdots \alpha_{st}^*$ is equivalent to a linear
combination of terms, each of which includes $\alpha_{it}^*$.  But
then applying the second relation we see that any cyclic product is
trivial:
\[
\alpha_{ij}^* \alpha_{jk}^* \alpha_{kl}^* \cdots \alpha_{st}^*
\alpha_{ti}^* = \left( \sum \pm [\mbox{various }(i-1)\mbox{--fold
products}] \alpha_{it}^*\right) \alpha_{ti}^* = 0.
\]
Thus $H^i(\psig_n, \Z)$ is generated as an abelian group by the
$i$--fold products of the one-dimensional generators that do not repeat
any $\alpha_{ij}^*$, do not repeat a first index, and which do not
contain any cyclic products.

We may encode such products as directed graphs on $[n]$ where
including $\alpha_{jk}^*$ in the product adds an edge ``$j \leftarrow
k$".  The fact that no cyclic products are allowed implies the graph
has no cycles, so it is a forest.  The fact that no first index is
repeated implies that each vertex is the target of at most one edge,
hence there is a natural planting of the forest, with the roots
corresponding to vertices that are not targets of any directed edge.
Since there are $i$ edges, and $n$ vertices, the forest must have
$(n-i)$ connected components.  But the number of $k$--component forests
on $[n]$ is $\binom{n-1}{k-1}n^{n-k}$ hence the number of
$(n-i)$--component forests on $[n]$---which gives the upper bound on
the rank of $H^i(\psig_n, \Z)$---is
\[
\binom{n-1}{(n-i)-1}n^{n-(n-i)} = \binom{n-1}{i}n^i . \proved
\]
\end{proof}

We know that $H^*(\psig_n, \Z)$ is a quotient of the algebra presented
by Brownstein and Lee's relations, but we also know by
\fullref{lem:lerayhirsch} that the rank of $H^i(\psig_n, \Z)$ is
$\binom{n-1}{i}n^i$.  Thus we have established the Brownstein--Lee
Conjecture (the Corollary to our Main Theorem).

\begin{thm}\label{thm:productstruct} 
The cohomology of $H^*(\psig_n, \Z)$ is generated by one-dimensional
classes $\alpha_{ij}^*$ where $i \not = j$, subject to the relations
\begin{enumerate}
\item $\alpha_{ij}^* \wedge \alpha_{ij}^* = 0$
\item $\alpha_{ij}^* \wedge \alpha_{ji}^* = 0$
\item $\alpha_{kj}^* \wedge \alpha_{ji}^* = (\alpha_{kj}^* -
  \alpha_{ij}^*) \wedge \alpha_{ki}^*$.
\end{enumerate}
In particular, the Poincar\'e series is ${\mathfrak p}(z) = (1 +
nz)^{n-1}$.
\end{thm}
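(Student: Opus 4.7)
The plan is to assemble \fullref{thm:productstruct} as a matching-of-bounds argument, since nearly all of the ingredients are already in place and what remains is to verify that no further relations are needed.

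First I would observe that \fullref{cor:onedimintegral} shows $H^*(\psig_n,\Z)$ is generated as a $\Z$-algebra by the one-dimensional classes $\alpha_{ij}^*$ with $i\ne j$, and that \fullref{thm:BrLeeSummary} (Brownstein--Lee) establishes that relations (1), (2), (3) genuinely hold in $H^*(\psig_n,\Z)$. Let $R_n$ denote the graded-commutative $\Z$-algebra presented abstractly by generators $\alpha_{ij}^*$ (in degree $1$, so squaring to zero in the exterior sense is automatic from graded-commutativity) modulo the three Brownstein--Lee relations. Then the universal property of this presentation produces a surjective graded ring homomorphism
\[
\Psi\colon R_n \twoheadrightarrow H^*(\psig_n,\Z).
\]
The theorem amounts to showing $\Psi$ is an isomorphism.

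Next I would exploit the rank bounds already established. By \fullref{lem:boundranks}, the Brownstein--Lee relations alone force $\rk\bigl(R_n^{\,i}\bigr) \le \binom{n-1}{i} n^i$, via the bijection between reduced products of the $\alpha_{ij}^*$ and planted forests on $[n]$ with $n-i$ components. On the other hand, \fullref{lem:lerayhirsch} (combining Leray--Hirsch with the Main Theorem) gives $\rk\bigl(H^i(\psig_n,\Z)\bigr) = \binom{n-1}{i} n^i$ exactly. Since $\Psi$ is a degree-wise surjection between finitely generated abelian groups of equal rank, and the target is free abelian (being a subalgebra of the Leray--Hirsch tensor product, or by direct inspection of the spectral sequence), $\Psi$ must be an isomorphism in every degree. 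Thus the three relations present $H^*(\psig_n,\Z)$ completely.

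Finally, the Poincar\'e series follows by summing the ranks:
\[
\mathfrak p(z) \;=\; \sum_{i\ge 0} \binom{n-1}{i} n^i z^i \;=\; (1+nz)^{n-1}.
\]
The main conceptual obstacle would have been showing the Brownstein--Lee relations suffice, but that has been reduced by \fullref{lem:boundranks} to the purely combinatorial planted-forest count; the only remaining subtlety is to confirm that $H^*(\psig_n,\Z)$ is torsion-free so that rank equality upgrades surjectivity to an isomorphism, and this is immediate from \fullref{lem:lerayhirsch} since the Leray--Hirsch isomorphism identifies $H^*(\psig_n,\Z)$ with the free abelian group $H^*(\opsig_n,\Z)\otimes H^*(\F_n,\Z)$.
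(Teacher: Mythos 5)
Your proof follows the same strategy as the paper: use \fullref{cor:onedimintegral} and \fullref{thm:BrLeeSummary} to get a surjection $\Psi\colon R_n\twoheadrightarrow H^*(\psig_n,\Z)$ from the Brownstein--Lee algebra, bound $R_n$ from above by the planted-forest count (\fullref{lem:boundranks}), compute the ranks of $H^*(\psig_n,\Z)$ exactly via Leray--Hirsch and the Main Theorem (\fullref{lem:lerayhirsch}), and match. The paper's own wrap-up is extremely terse; you fill in the final inference, which is good, but the version you state is actually false as a general principle: a surjection between finitely generated abelian groups of equal rank with free abelian \emph{target} need not be an isomorphism --- the projection $\Z\oplus\Z/2\twoheadrightarrow\Z$ is a counterexample. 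Freeness of the target is not the fact doing the work.

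The correct upgrade uses something you've already noted in passing: by the planted-forest bijection, $R_n^i$ is not merely of rank $\le\binom{n-1}{i}n^i$, it is \emph{spanned} as an abelian group by $\binom{n-1}{i}n^i$ explicit elements, i.e.\ it is a quotient of $\Z^{\binom{n-1}{i}n^i}$. Once you know $R_n^i$ surjects onto a free abelian group of that same rank, the quotient description forces $R_n^i\cong\Z^{\binom{n-1}{i}n^i}$ (the kernel of $\Z^{\binom{n-1}{i}n^i}\twoheadrightarrow R_n^i$ must have rank zero, hence is trivial), and then a surjection between free abelian groups of the same finite rank is automatically an isomorphism. So the freeness of the \emph{source} $R_n^i$ is what you should deduce, and the freeness of the target is then a consequence rather than a hypothesis. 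With that one repair, your argument is complete and matches the paper's.
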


\begin{exmp}
The cohomology groups of $\psig_4$ are 
\[
{H}^i(\psig_{4}, \Z) = 
 \left\{ 
\begin{array}{lc}
\Z^{64} & i = 3\\
\Z^{48} & i = 2\\
\Z^{12} & i = 1\\
\Z & i=0\,.
\end{array}
\right.
\]
\end{exmp}

As was remarked in \fullref{sec:background}, viewed as a subgroup of
$\mbox{Aut}(\F_n)$, $\psig_n$ is contained in the subgroup
$\mbox{IA}_n$.  Magnus proved that $\mbox{IA}_n$ is generated by the
$\alpha_{ij}$, which generate $\psig_n$, along with the automorphisms
induced by
\[
\theta_{ijk}= \left\{
\begin{array}{lc}
x_i \rightarrow x_i[ x_j , x_k] & \cr
x_l \rightarrow x_l & l \ne i\,.\cr
\end{array}  \right.
\]
Aside from this generating set,  little is known about
$\mbox{IA}_n$.  Fred Cohen and Jon Pakianathan, and independently
Benson Farb, showed that $H_1(\mbox{IA}_n)$ is free abelian of rank
$n^2(n-1)/2$, and is generated by the classes $[\alpha_{ij}]$ and
$[\theta_{ijk}]$.  Krsti\'c and McCool proved that $\mbox{IA}_3$ is
not finitely presentable \cite{KrMcC97}.  The Krsti\'c--McCool result
has recently been
extended by Bestvina, Bux and Margalit to $\mbox{IA}_n$ for $n \ge 3$; they
also succeed in computing the cohomological dimension of $\mbox{IA}_n$
and in showing that its top dimensional cohomology is not finitely generated \cite{BeBuMa06}. 
Fred Cohen has pointed out to us that
\fullref{thm:productstruct} gives some information about
$H^*(\mbox{IA}_n, \Z)$:

\begin{cor}\label{cor:cohencorollary}
The injection $\psig_n \hookrightarrow \mbox{IA}_n$ induces a split
epimorphism 
\[
H^*(\mbox{IA}_n, \Z) \twoheadrightarrow H^*(\psig_n, \Z)\ .
\]
Moreover, the suspension of $B\psig_n$ is homotopy equivalent to a
bouquet of spheres, and it is a retract of the suspension of
$B\mbox{IA}_n$.
\end{cor}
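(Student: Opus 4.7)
The argument proceeds in three parts. First, for the split epimorphism of abelian groups, I would use the Cohen--Pakianathan--Farb computation recalled above: $H_1(\mbox{IA}_n, \Z)$ is free abelian with basis $\{[\alpha_{ij}]\}\cup\{[\theta_{ijk}]\}$, while $H_1(\psig_n, \Z)$ is free abelian on $\{[\alpha_{ij}]\}$ (since the McCool relators are commutators). The inclusion therefore identifies $H_1(\psig_n, \Z)$ with a direct summand of $H_1(\mbox{IA}_n, \Z)$, so on cohomology $\iota^*\colon H^1(\mbox{IA}_n, \Z) \to H^1(\psig_n, \Z)$ admits a splitting sending each $\alpha_{ij}^*$ to a lift $\tilde\alpha_{ij}^*$. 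Combining \fullref{cor:onedimintegral} with \fullref{lem:boundranks} yields a distinguished $\Z$-basis of $H^q(\psig_n, \Z)$ consisting of explicit $q$-fold cup products of the $\alpha_{ij}^*$; sending each such basis monomial to the corresponding product of the lifts $\tilde\alpha_{ij}^*$ defines a $\Z$-linear map $s\colon H^*(\psig_n, \Z) \to H^*(\mbox{IA}_n, \Z)$. Because $\iota^*$ is a ring homomorphism that sends each $\tilde\alpha_{ij}^*$ back to $\alpha_{ij}^*$, the composition $\iota^*\circ s$ is the identity on each basis element, and hence on all of $H^*(\psig_n, \Z)$.

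Second, I would show that $\Sigma B\psig_n$ is a bouquet of spheres. The cohomology $H^*(B\psig_n, \Z)$ is torsion-free (\fullref{lem:lerayhirsch}), of finite type concentrated in degrees $\le n-1$, and generated by one-dimensional classes satisfying $\alpha_{ij}^*\wedge\alpha_{ij}^* = 0$ (Brownstein--Lee relation~(1)). These facts force every primary cohomology operation on $H^*(B\psig_n, \F_p)$ to vanish: the Bockstein vanishes by torsion-freeness; $Sq^k$ for $k\ge 2$ vanishes on one-dimensional classes by the unstable axiom; and the Cartan formula propagates the vanishing (given $x^2=0$) to every cup product of the generating classes, which in turn generate $H^*$ in every degree. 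For each basis monomial $\omega$ of $H^q(B\psig_n, \Z)$, the suspended class $\Sigma\omega\in \tilde H^{q+1}(\Sigma B\psig_n, \Z)$ is then spherical; assembling sphere-representatives $S^{q+1}\to\Sigma B\psig_n$ over all $\omega$ gives a map
\[
\Psi\colon \bigvee_\omega S^{|\omega|+1}\longrightarrow \Sigma B\psig_n
\]
inducing an integral homology isomorphism between simply connected CW complexes, hence a homotopy equivalence by Whitehead's theorem.

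Third, to produce the retraction I would dualize the splitting $s$. For each basis monomial $\omega$, the lifted class $s(\omega)\in H^{|\omega|}(B\mbox{IA}_n, \Z)$ suspends to a class in $\tilde H^{|\omega|+1}(\Sigma B\mbox{IA}_n, \Z)$ which, by the same sphericality argument applied inside $\Sigma B\mbox{IA}_n$, is represented by a map $\Sigma B\mbox{IA}_n \to S^{|\omega|+1}$. Assembling these over all $\omega$ and composing with $\Psi^{-1}$ yields $r\colon \Sigma B\mbox{IA}_n \to \Sigma B\psig_n$; the composition $r\circ\Sigma\iota$ acts as the identity on integral cohomology by construction, so it is a homotopy equivalence by Whitehead. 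The main technical obstacle is the sphericality claim in the second step: vanishing of primary cohomology operations is necessary but not in general sufficient to force a simply connected space to split as a wedge of spheres, because higher Massey products and secondary operations can still obstruct the splitting. What saves the argument here is that $H^*(B\psig_n, \Z)$ has a purely quadratic presentation via the Brownstein--Lee relations, so the algebra is formal and all higher Massey products vanish; this formality, together with the torsion-freeness of the cohomology, provides exactly the geometric input required to build the sphere-indexed maps explicitly.
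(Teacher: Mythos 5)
Your Part 1 (the split epimorphism on cohomology) is correct and in the same spirit as the paper: both arguments exploit that $(\psig_n)_{\mathrm{ab}} \to (\mbox{IA}_n)_{\mathrm{ab}}$ is a split monomorphism of free abelian groups, so restriction on $H^1$ splits, and both then leverage the fact (\fullref{prop:outproductstruct}, \fullref{cor:onedimintegral}) that $H^*(\psig_n, \Z)$ is generated in degree one. The difference is that you lift the generators individually and build a $\Z$--linear section on a chosen monomial basis, whereas the paper factors everything through the abelianization $\mbox{IA}_n \to \Z^{n\binom{n}{2}}$.

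Parts 2 and 3 contain a genuine gap, one which you yourself flag and then try to close by invoking formality. The appeal to vanishing Steenrod operations plus formality does not suffice to conclude that $\Sigma B\psig_n$ splits as a wedge of spheres: a quadratic presentation of the cohomology ring does not by itself give formality, rational formality says nothing about $p$--local or integral homotopy type, and even full formality with torsion-free cohomology does not preclude higher-order attaching phenomena that obstruct an integral wedge decomposition. Worse, Part 3 asks to run ``the same sphericality argument applied inside $\Sigma B\mbox{IA}_n$,'' but $H^*(\mbox{IA}_n,\Z)$ is essentially unknown (the paper notes $\mbox{IA}_3$ is not even finitely presented), so there is no control over cohomology operations or Massey products there; a class in $\tilde H^{m}(\Sigma B\mbox{IA}_n,\Z)$ is in general only represented by a map to $K(\Z,m)$, not to $S^m$.

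The paper's route avoids all of this with one elementary observation: factor through the abelianization torus. The composite $B\psig_n \to B\mbox{IA}_n \to B\bigl[\Z^{n\binom{n}{2}}\bigr]$ induces a surjection $H^*\bigl(\Z^{n\binom{n}{2}},\Z\bigr) \to H^*(\psig_n,\Z)$ because of degree-one generation. The suspension of a torus is a wedge of spheres by the classical splitting $\Sigma(X\times Y) \simeq \Sigma X \vee \Sigma Y \vee \Sigma(X\wedge Y)$ applied inductively; one then projects onto the subwedge $S_{\mbox{\scriptsize P}\Sigma_n}$ picked out by the image of $H_*(B\psig_n)$. The composite $\Sigma B\psig_n \to \Sigma B\bigl[\Z^{n\binom{n}{2}}\bigr] \to S_{\mbox{\scriptsize P}\Sigma_n}$ is then a homology isomorphism of simply connected spaces, hence a homotopy equivalence, and the retraction of $\Sigma B\mbox{IA}_n$ onto $\Sigma B\psig_n$ is obtained by routing through the torus suspension. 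This sidesteps any question about operations or Massey products and makes no reference to the (unknown) cohomology ring of $\mbox{IA}_n$. You should replace your sphericality argument with this factorization.
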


\begin{rem}
The suspension of $BP_n$, where $P_n$ is the pure braid group, is also
homotopy equivalent to a wedge of spheres. (This is Corollary 3 of
\cite{Sch97}; see also \cite{AdCoCo03}, where this property and its
implications for $K$--theory are explored.)  Like the inclusion
$\psig_n \hookrightarrow \mbox{IA}_n$, the inclusion $P_n
\hookrightarrow \psig_n$ induces a surjection on cohomology:
$H^*(\psig_n, \Z) \twoheadrightarrow H^*(P_n, \Z)$, but this
surjection does not split over $\Z$, but it does split over
$\Z[\frac{1}{2}]$ (Proposition~4.3 of \cite{BrLe93}).
\end{rem}

\begin{proof} [Proof of \fullref{cor:cohencorollary}, due to Fred Cohen]
Consider the map $\psig_n \hookrightarrow \mbox{IA}_n$ composed with
the abelianization
 \[
 \mbox{IA}_n \to \bigoplus_{n \binom{n}{2}} \Z = H_1(\mbox{IA}_n, \Z)\ .
 \]
The induced map from the first homology of $\psig_n$ to the first
homology group of $\Z^{{n \binom{n}{2}}}$ is a split
monomorphism. Thus the induced map on the level of integral cohomology
is a split epimorphism, at least in degree 1. Since $H^1(\psig_n, \Z)$
generates $H^*(\psig_n, \Z)$ as an algebra, it follows that
\[
H^*(\bigoplus_{n \binom{n}{2}} \Z, \Z) \to H^*(\psig_n, \Z)
\]
is an epimorphism.

However, the cohomology of $\Z^{n \binom{n}{2}}$ is an exterior
algebra on one-dimensional classes, some of which correspond to the
generators for the cohomology of $\psig_n$.  Furthermore, the single
suspension of $B[\Z^{n \binom{n}{2}}]$ is a bouquet of spheres, with
some subset of these corresponding exactly to the image of homology of
$\psig_n$ (suitably reindexed).  Denote this subset by
$S_{\mbox{\scriptsize P}\Sigma_n}$.  We can then project from the
suspension of $B[\Z^{n \binom{n}{2}}]$,
\[
\Sigma( B[\Z^{n \binom{n}{2}}]) \rightarrow S_{\mbox{\scriptsize P}\Sigma_n}\ .
\]
The composition
\[
\Sigma( B\psig_n) \rightarrow \Sigma( B[\Z^{n \binom{n}{2}}]) \rightarrow 
S_{\mbox{\scriptsize P}\Sigma_n}
\]
induces a homology isomorphism (of simply connected spaces) and thus
is a homotopy equivalence.  It follows that $\Sigma(B\psig_n)$ is a
retract of $\Sigma(B\mbox{IA}_n)$.
\end{proof}

\begin{rem}  
Do the connections above give new information about the cohomology of
$\mbox{IA}_n$?  Using the Johnson homomorphism Alexandra Pettet has
found a large number of cohomology classes in $H^2(\mbox{IA}_n, \Z)$
\cite{Pe05}, all of which are in the image of the map $\wedge^2
H^1(\mbox{IA}_n, \Z) \rightarrow H^2(\mbox{IA}_n, \Z)$.  Pettet points
out that as a consequence of \fullref{cor:cohencorollary} one
gets a commutative diagram
\[
\begin{CD}
\wedge^2 H^1(\mbox{IA}_n, \Z) @>>> \wedge^2 H^1(\psig_n, \Z)\\
@VVV                                                          @VVV\\
H^2(\mbox{IA}_n, \Z) @>>>  H^2(\psig_n, \Z)
\end{CD}
\]
where all the maps, excluding the left edge, are surjections.  Thus
the classes arising from \fullref{cor:cohencorollary} are
included in the classes Pettet has found.

Another curious point is that \fullref{cor:cohencorollary} gives
a non-trivial map
\[
H_*(BIA_n) \to T[\bar H_*(B\psig_n)]
\]
where T[V] denotes the tensor algebra generated by $V$.  This process
also gives a map out of $B\mbox{IA}_n$ to a highly non-trivial space,
which \emph{might} give more information about $B\mbox{IA}_n$. The map
is
\[
 B\mbox{IA}_n \rightarrow \Omega \Sigma( B\psig_n)
\]
factoring the Freudenthal suspension $B\psig_n \rightarrow \Omega
\Sigma (B\psig_n)$.  By arguments similar to those above, the
composite
\[
 B\psig_n \rightarrow B\mbox{IA}_n \rightarrow \Omega \Sigma( B\psig_n )
\]
induces a split epimorphism in cohomology.  For information on such
arguments, in particular how they apply to the subgroup of $\psig_n$
generated by $\{\alpha_{ij}~|~i > j\}$, see \cite{CoPa05}.
\end{rem}

\bibliographystyle{gtart}
\bibliography{link}

\end{document}